\numberwithin{equation}{section}
\newtheorem{thm}{Theorem}[section]
\newtheorem{prop}[thm]{Proposition}
\newtheorem{lem}[thm]{Lemma}
\theoremstyle{remark}
\newcommand{\nc}{\newcommand}
\nc{\cO}{\mathcal O}
\nc{\cF}{\mathcal F}
\nc{\cL}{\mathcal L}
\nc{\msl}{\mathfrak{sl}}
\nc{\mgl}{\mathfrak{gl}}
\nc{\U}{\mathrm U}
\nc{\bH}{\EuScript H}
\nc{\Res}{\mathrm{Res\ }}
\nc{\Lie}{\mathrm{Lie\ }}
\newcommand{\bZ}{{\mathbb Z}}
\nc{\ch}{\mathrm{ch}}
\nc{\la}{\lambda}
\nc{\msp}{\mathfrak{sp}}
\nc{\cd}{\cdots}
\nc{\hk}{\hookrightarrow}
\nc{\T}{\otimes}
\nc{\al}{\alpha}
\nc{\om}{\omega}
\nc{\veps}{\varepsilon}
\nc{\ket}{\rangle}
\newcommand{\lk}{{\mathrm{linked}}}
\newcommand{\ulk}{{\mathrm{unlinked}}}
\newcommand{\gr}{{\mathrm{gr}}}
\begin{document}

\title{Categorification of quiver diagonalization and Koszul algebras}
\author{Vladimir Dotsenko}
\address{Institut de Recherche Math\'ematique Avanc\'ee, UMR 7501, Universit\'e de Strasbourg et CNRS, 7 rue Ren\'e-Descartes, 67000 Strasbourg, France}
\email{vdotsenko@unistra.fr}

\author{Evgeny Feigin}
\address{School of Mathematical Sciences, Tel Aviv University, Tel Aviv, 69978, Israel}
\email{evgfeig@gmail.com}

\author{Piotr Kucharski}
\address{Institute of Mathematics, University of Warsaw, ul. Banacha 2, 02-097 Warsaw, Poland}
\email{piotr.kucharski@mimuw.edu.pl}

\author{Markus Reineke}
\address{Ruhr-University Bochum, Faculty of Mathematics, Universit\"atstrasse 150, 44780 Bochum, Germany}
\email{Markus.Reineke@ruhr-uni-bochum.de}

\date{}

\begin{abstract} In earlier work of three of the authors of the present paper, a supercommutative quadratic algebra was associated to each symmetric quiver, and a new proof of positivity of motivic Donaldson--Thomas invariants of symmetric quivers was given using the so called numerical Koszul property of these algebras. It was furthermore conjectured that for each symmetric quiver such an algebra is Koszul. In this work, we lift the linking and unlinking operations on symmetric quivers of Ekholm, Longhi and the third author to the level of quadratic algebras, and use those lifts to prove the Koszulness conjecture.
\end{abstract}

\maketitle
\tableofcontents

\section{Introduction}
The motivic Donaldson-Thomas (DT) invariants of symmetric quivers, introduced in \cite{kontsevich_cohomological}, are natural analogues of Donaldson-Thomas invariants of sheaf counts. They are originally defined purely formally, by an Euler product factorization of the motivic generating series of stacks of representations of the quiver. Therefore, more structural interpretations are desirable, and several such have been developed, of geometric \cite{franzen_reineke,MR3034296,meinhardt_reineke}, algebraic \cite{MR4499100,DoMo,Efimov,kontsevich_cohomological}, or combinatorial \cite{MR4627326,MR2889742,MR4675069} nature.
The aim of the present work is to synthesize two of these approaches, enhancing the understanding on both the algebraic and the combinatorial side. Namely, we simultaneously consider the algebraic description of DT invariants in terms of (the Poincar\'e series of) Lie superalgebras of \cite{MR4499100}, and the combinatorial description of \cite{MR4627326}, reducing calculation of DT invariants to the case of multiple loop quivers, by iteration of linking and unlinking procedures modifying the quiver. 
These procedures are rooted in the knot-quiver correspondence \cite{MR3869892, MR4101645} and multiple cover generalizations of the skein relations for boundaries of holomorphic disks \cite{ekholm2021skeins}. More precisely, the application of such relations to disks corresponding to quiver nodes suggests that if linking or unlinking of their boundaries (hence the names of the operations) is accompanied by the appearance of an appropriate new node, then the motivic generating series is preserved, which was proved in \cite{MR4156213}. From the physical point of view, linking and unlinking corresponds to changing the representation of the BPS spectrum of the 3d $\mathcal{N}=2$ theory associated to the quiver, in which the modification of interactions is compensated by the appearance of a new BPS state \cite{MR4156213}.\\[1ex]
As our main result, we categorify these operations to the algebraic level, and use this categorification to prove the Koszulness conjecture of \cite{MR4499100}. More precisely, a quadratic algebra $\mathcal{A}_Q$ is associated in \cite{MR4499100} to every symmetric quiver $Q$, such that the above mentioned Lie superalgebra encoding the DT invariants of $Q$ arises via the Koszul dual of $\mathcal{A}_Q$; see Section 
\ref{sec:rec} for a detailed recollection. The algebra $\mathcal{A}_Q$ was conjectured to be Koszul in \cite{MR4499100} (although a weaker property of numerical Koszulness was sufficient to establish the relation to DT invariants), but standard techniques for proving Koszulness, such as quadratic Groebner bases, do not apply in general. In \cite{MR4627326}, to $Q$ and a choice of two vertices of $Q$, new quivers $Q^{\rm linked}$ and $Q^{\rm unlinked}$ are associated, whose motivic generating series are related to the one of $Q$ by a specialization of variables.\\[1ex]
These two approaches to DT invariants are combined in Sections \ref{sec:linking} and \ref{sec:unlinking}. In Theorem \ref{th:linking-gr}, it is proved that $\mathcal{A}_{Q^{\rm linked}}$ is isomorphic to the associated graded algebra of $\mathcal{A}_Q$ with respect to an appropriate filtration. In Theorem \ref{th:unlinking-h}, it is proved that, in turn, $\mathcal{A}_Q$ is isomorphic to the homology of $\mathcal{A}_{Q^{\rm unlinked}}$ with respect to an appropriate differential. The proof of that statement relies on the strategy of applying the linking procedure to the unlinked quiver, allowing one to combine the differential construction with the filtration construction into a very natural spectral sequence argument.  \\[1ex]
In Section \ref{sec:koszulness}, the key method of \cite{MR4627326} to produce a fully unlinked quiver (that is, a disjoint union of multiple loop quivers) by iterated linking/unlinking of $Q$ is recalled. Applying it to the level of the algebras $\mathcal{A}_Q$, this is shown to imply the Koszulness conjecture, reducing it to the known case of multiple loop quivers~\cite{MR4499100}.

\subsection*{Acknowledgements. } 
This work crucially benefited from the interaction the authors had during the workshop ``DT invariants of symmetric quivers: algebra, combinatorics, and topology'' organized thanks to financial support of the University of Strasbourg Institute for Advanced Study through the Fellowship USIAS-2021-061 within the French national program ``Investment for the future'' (IdEx-Unistra).

The work of V.D. was supported by Institut Universitaire de France and by the French national research agency project ANR-20-CE40-0016.
The work of P.K. was supported by the SONATA grant no. 2022/47/D/ST2/02058 funded by the Polish National Science Centre.
The work of M.R. was supported by the Deutsche Forschungsgemeinschaft grant CRC-TRR 191 “Symplectic structures in geometry, algebra and dynamics” (281071066).

\section{Recollections}\label{sec:rec}

All vector spaces and chain complexes in this article are defined over the field of rational numbers. All chain complexes are homologically graded, with the differential of degree $-1$. 

Throughout the article, $Q$ denotes a finite quiver with the set of vertices $Q_0$ and the set of arrows~$Q_1$. 
We shall assume that $Q$ is symmetric, that is, the number of arrows from $i$ to $j$ is equal to the number of arrows from $j$ to $i$ for all $i,j\in Q_0$. We denote by $M=(m_{ij})_{i,j\in Q_0}$ the incidence matrix of $Q$, and by $L$ the free abelian group $\mathbb{Z}^{Q_0}$, whose standard basis elements will be denoted by $\alpha_i$. The \emph{Euler form} of $Q$ is the bilinear form on $L$ defined as
$$\chi({\bf d},{\bf e})=\sum_{i\in Q_0}\mathbf{d}_i\mathbf{e}_i-\sum_{(a\colon i\rightarrow j) \in Q_1}\mathbf{d}_i\mathbf{e}_j.$$
Under our assumption, the Euler form is symmetric.

\subsection{Graded vector spaces and algebras}

Most vector spaces considered in this article are of the form
 \[
V=\bigoplus_{\mathbf{d}\in L}V_\mathbf{d}, \quad V_\mathbf{d}=\bigoplus_{(\mathbf{d},n)\in L\times\mathbb{Z}}V_\mathbf{d}^n .
 \]
We consider the category $\mathrm{Vect}^{L\times\mathbb{Z}}$ of such vector spaces, with morphisms being maps of degree zero. The category $\mathrm{Vect}^{L\times\mathbb{Z}}$ is monoidal, with the monoidal structures given by the tensor product $V\otimes W$ defined by
 \[
(V\otimes W)_\mathbf{d}^n=\bigoplus_{(\mathbf{d'},n')+(\mathbf{d''},n'')=(\mathbf{d},n)}V_\mathbf{d'}^{n'}\otimes W_{\mathbf{d''}}^{n''}.
 \]
Moreover, we can use the Koszul sign rule to define a braiding $\sigma\colon V\otimes W\to W\otimes V$ by the formula $\sigma(v\otimes w)=(-1)^{n'n''} w\otimes v$, for $v\otimes w\in V_\mathbf{d'}^{n'}\otimes W_{\mathbf{d''}}^{n''}$; this braiding makes the category $\mathrm{Vect}^{L\times\mathbb{Z}}$ symmetric monoidal. Finally, we shall use graded duals; for an object $V$, its graded dual $V^\vee$ is defined by the formula
 $$
V^\vee=\bigoplus_{\mathbf{d}\in L} V_\mathbf{d}^\vee,\qquad
V_\mathbf{d}^\vee=\bigoplus_{n\in\mathbb{Z}}(V_{\mathbf{d}}^{-n})^*.
 $$

We shall work with associative (super)commutative algebras in the category $\mathrm{Vect}^{L\times\mathbb{Z}}$. One can either define them directly, using the tensor product to talk about the structure operations and using the braiding to implement permutations of arguments, or, alternatively, one may note that the category $\mathrm{Vect}^{L\times\mathbb{Z}}$ contains the category $\mathrm{Vect}$ as a full symmetric monoidal subcategory of objects of degree zero, and so one may talk about objects in $\mathrm{Vect}^{L\times\mathbb{Z}}$ that are algebras over the operad $\mathsf{Com}$ in $\mathrm{Vect}$. 

\subsection{Poincar\'e series}\label{sec:PoSe}

Let $V\bigoplus_{k\in\bZ} V^k$ be a $\mathbb{Z}$-graded vector space with finite-dimensional components.  The \emph{Poincar\'e series} $P(V,q)$ is defined by the formula
 \[
P(V,q)=\sum_{k\in\bZ}\dim (V^k)(-q^{\frac12})^{-k}.
 \]
(Division by two corresponds to the standard convention used for cohomological Hall algebras, and using the exponent $-k$ comes from working with homologically graded vector spaces.) In general, this expression is an element of the vector space of doubly infinite Laurent series $\mathbb{Q}[[q^{\pm\frac12}]]$; in our work, we shall only deal with situations where it is finite on one of the sides, so that it belongs to one of the fields of formal Laurent series $\mathbb{Q}((q^{\frac12}))=\mathbb{Q}[[q^{\frac12}]][q^{-\frac12}]$ and $\mathbb{Q}((q^{-\frac12}))=\mathbb{Q}[[q^{-\frac12}]][q^{\frac12}]$. 

Let us consider formal variables $x_i$, $i\in Q_0$, and denote, for ${\bf d}\in L$, $x^{\bf d}=\prod_{i\in Q_0} x_i^{{\bf d}_i}$. 
To an object $V$ of $\mathrm{Vect}^{L\times\mathbb{Z}}$ with finite-dimensional components $V_\mathbf{d}^k$, we shall associate its \emph{Poincar\'e series} $P(V,x,q)$:
 \[
P(V,x,q)=\sum_{\mathbf{d}\in L} P(V_\mathbf{d},q)x^\mathbf{d}
=\sum_{\mathbf{d}\in L}\sum_{k\in\mathbb{Z}}
\dim (V_\mathbf{d}^k)(-q^{\frac12})^{-k}x^\mathbf{d}.
 \]
For certain objects in $\mathrm{Vect}^{L\times\mathbb{Z}}$, we shall use the fact that Poincar\'e series behave well with respect to various operations, including tensor products. For that, it is important to consider a smaller category. Our choice is to consider the subcategory $\mathcal{C}$ consisting of objects 
$$
V=\bigoplus_{\mathbf{d}\in\mathbb{Z}_{\ge0}^{Q_0}}V_\mathbf{d}
=\bigoplus_{\mathbf{d}\in\mathbb{Z}_{\ge0}^{Q_0}}\bigoplus_{k\in\mathbb{Z}}V_\mathbf{d}^k$$
such that all components $V_\mathbf{d}^k$ are finite-dimensional and $V_\mathbf{d}^k=0$ for $k\gg0$.
The Poincar\'e series is a ring homomorphism 
 \[
K_0(\mathcal{C})\to R_Q=\mathbb{Q}((q^{\frac12}))[[x_i\colon i\in Q_0]].
 \]
We emphasize that not all objects we consider belong to $\mathcal{C}$, but whenever we use the compatibility of Poincar\'e series with tensor products of vector spaces, we shall restrict ourselves to objects from $\mathcal{C}$.

For a symmetric quiver $Q$, its \emph{motivic generating series} $A_Q(x,q)$ is defined by the formula 
$$A_Q(x,q)=\sum_{{\bf d}\in\mathbb{Z}_{\ge 0}^{Q_0}}\frac{(-q^{\frac12})^{-\chi({\bf d},{\bf d})}x^{\bf d}}{\prod_{i\in Q_0} (q^{-1})_{\mathbf{d}_i}} \in R_Q,$$
where $(q)_n=(1-q)\cdot\dots\cdot(1-q^n)$. This generating series is the Poincar\'e series of the multigraded vector space
 \[
\mathcal{H}_Q=\bigoplus_{\bf d} H^{*+\chi({\bf d},{\bf d})}_{G_{\bf d}}(R_{\bf d}(Q),\mathbb{Q}),
 \]
where the individual graded pieces are spaces of multisymmetric polynomials:
 \[
H^*_{G_{\bf d}}(R_{\bf d}(Q),\mathbb{Q})=H^*_{G_{\bf d}}(\mathrm{pt})\cong \Lambda_\mathbf{d}
 \]
with 
 \[
\Lambda_\mathbf{d}=\mathbb{Q}[z_{i,r}\colon i\in Q_0,\, 1\le r\le \mathbf{d}_i]^{\Sigma_\mathbf{d}},\qquad
\Sigma_\mathbf{d}=\prod_{i\in Q_0}\Sigma_{\mathbf{d}_i}.
 \]

\subsection{Quadratic algebras associated to symmetric quivers}\label{sec:quad-alg}

In \cite{MR4499100}, to each symmetric quiver $Q$, a (super)commutative associative algebra $\mathcal{A}_Q$ was associated. It has generators $e_{i,k}$ with $i\in Q_0$, $k\ge 0$; we set $\deg(e_{i,k})=(\alpha_i,-2k-m_{i,i})\in L\times\mathbb{Z}$, so that the vector space spanned by the generators is an object of $\mathcal{C}$. To express its relations, we shall use the formal generating series 
\[
e_i(z)=\sum_{k\ge 0} e_{i,k}z^k.
\] 
The first group of relations is obtained by extracting individual coefficients of 
 \[
e_i(z)e_j(w) - (-1)^{m_{i,i}m_{j,j}}e_j(w)e_i(z) = 0, 
 \]
these relations simply mean that the algebra $\mathcal{A}_Q$ is indeed supercommutative. The second group of relations states that 
 \[
e_i(z)\frac{d^p\phantom{z}}{dz^p}e_j(z)=0 \text{ for all } i,j \text{ and all } 0\le p<m_{i,j}.
 \]
It is easy to show that this group of relations is equivalent to a larger system of relations  
 \[
\frac{d^p\phantom{z}}{dz^p}e_i(z)\frac{d^q\phantom{z}}{dz^q}e_j(z)=0 \text{ for all } i,j \text{ and for } 0\le p+q<m_{i,j}.
 \]

Let us now recall, following \cite[Prop.~3.4]{MR4499100}, a convenient way to interpret the graded dual space $\mathcal{A}_Q^\vee$ of the algebra $\mathcal{A}_Q$. Suppose that $\xi\in \mathcal{A}_{Q,\mathbf{d}}^\vee$. Let us consider formal variables $z_{i,r}$, $i\in Q_0, 1\le r\le \mathbf{d}_i$, of homological degree $2$, and for each such variable, we form the corresponding series $e_i(z_{i,r})$. Note that under our conventions each term $e_{i,k}z_{i,r}^k$ is of homological degree $-m_{i,i}$, so the expression 
 \[
\prod_{i\in Q_0} e_i(z_{i,1})\dots e_i(z_{i,\mathbf{d}_i})
 \]
is of homological degree $-\mathbf{m}\cdot\mathbf{d}:=-\sum_{i\in Q_0} m_{i,i}\mathbf{d}_i$, and thus the evaluation 
 \[
f_\xi=\xi \Bigl(\prod_{i\in Q_0} e_i(z_{i,1})\dots e_i(z_{i,\mathbf{d}_i})\Bigr)
 \]
is a map of graded vector spaces from $\mathcal{A}_{Q,\mathbf{d}}^\vee$ to the degree shifted polynomial ring 
 \[
\mathbb{Q}[z_{i,r}\colon  i\in Q_0,\, 1\le r\le \mathbf{d}_i][-\mathbf{m}\cdot\mathbf{d}]. 
 \] 
In fact, it is possible to prove that the image of that space is precisely all multisymmetric polynomials divisible by
\[
F_\mathbf{d}:=\prod_{i\in Q_0}\prod_{1\le r<r'\le \mathbf{d}_i}(z_{i,r}-z_{i,r'})^{m_{i,i}} \prod_{\substack{i,i'\in Q_0, i\ne i'\\ 1\le r\le \mathbf{d}_i,\\1\le r'\le \mathbf{d}_{i'}}}(z_{i,r}-z_{i',r'})^{m_{i,i'}}.
 \]
We call $F_\mathbf{d}\Lambda_{\mathbf{d}}[-\mathbf{m}\cdot\mathbf{d}]$ the \emph{functional realizatiion} of the graded dual space $\mathcal{A}_Q^\vee$. This realization implies the following result, connecting these algebras to motivic generating series.

\begin{prop}[{\cite[Prop.~3.6]{MR4499100}}]\label{prop:poincare series}
In the ring $\mathbb{Q}(q^{\frac12})[[x_i\colon i\in Q_0]]$, we have 
 \[
A_Q(x,q)=P(\mathcal{A}_Q,q^{\frac12}x,q).
 \]
\end{prop}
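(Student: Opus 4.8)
The plan is to compute the coefficient of $x^{\mathbf{d}}$ on both sides for each $\mathbf{d}\in\mathbb{Z}_{\ge 0}^{Q_0}$ and to check that they agree. Since $\deg(e_{i,k})=(\alpha_i,-2k-m_{i,i})$, the graded piece $\mathcal{A}_{Q,\mathbf{d}}$ has finite-dimensional homogeneous components, bounded above in homological degree, so $\mathcal{A}_Q$ lies in $\mathcal{C}$, the series $P(\mathcal{A}_Q,x,q)$ is well defined, and the substitution $x_i\mapsto q^{\frac12}x_i$ multiplies the coefficient of $x^{\mathbf{d}}$ by $(q^{\frac12})^{\sum_{i}\mathbf{d}_i}$. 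Thus it suffices to establish, for each $\mathbf{d}$, the identity
\[
(q^{\frac12})^{\sum_{i}\mathbf{d}_i}\;P(\mathcal{A}_{Q,\mathbf{d}},q)=\frac{(-q^{\frac12})^{-\chi(\mathbf{d},\mathbf{d})}}{\prod_{i\in Q_0}(q^{-1})_{\mathbf{d}_i}} .
\]

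Next I would pass to the graded dual. Because all homogeneous components of $\mathcal{A}_{Q,\mathbf{d}}$ are finite-dimensional, $\dim\mathcal{A}_{Q,\mathbf{d}}^{k}=\dim(\mathcal{A}_{Q,\mathbf{d}}^\vee)^{-k}$, and hence $P(\mathcal{A}_{Q,\mathbf{d}},q)$ is obtained from $P(\mathcal{A}_{Q,\mathbf{d}}^\vee,q)$ by the substitution $q\mapsto q^{-1}$ (both being rational functions of $q^{\frac12}$, as the computation below confirms). Now I would feed in the functional realization recalled above: the assignment $\xi\mapsto f_\xi$ is an isomorphism $\mathcal{A}_{Q,\mathbf{d}}^\vee\xrightarrow{\ \sim\ }F_{\mathbf{d}}\Lambda_{\mathbf{d}}[-\mathbf{m}\cdot\mathbf{d}]$ in $\mathrm{Vect}^{L\times\mathbb{Z}}$ (injectivity because $\mathcal{A}_Q$ is generated by the $e_{i,k}$, so the coefficients of $\prod_i e_i(z_{i,1})\cdots e_i(z_{i,\mathbf{d}_i})$ span $\mathcal{A}_{Q,\mathbf{d}}$; surjectivity onto $F_{\mathbf{d}}\Lambda_{\mathbf{d}}$ being the quoted statement). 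Multiplication by $F_{\mathbf{d}}$ is an isomorphism of graded vector spaces $\Lambda_{\mathbf{d}}\xrightarrow{\ \sim\ }F_{\mathbf{d}}\Lambda_{\mathbf{d}}$ that raises homological degree by $2\deg F_{\mathbf{d}}$ (the variables $z_{i,r}$ having homological degree $2$), so, collecting this shift together with $[-\mathbf{m}\cdot\mathbf{d}]$,
\[
P(\mathcal{A}_{Q,\mathbf{d}}^\vee,q)=(-q^{\frac12})^{-\mathbf{m}\cdot\mathbf{d}-2\deg F_{\mathbf{d}}}\;P(\Lambda_{\mathbf{d}},q).
\]
The factor $P(\Lambda_{\mathbf{d}},q)$ is elementary: $\Lambda_{\mathbf{d}}=\bigotimes_{i\in Q_0}\mathbb{Q}[z_{i,1},\dots,z_{i,\mathbf{d}_i}]^{\Sigma_{\mathbf{d}_i}}$ is a tensor product of polynomial rings on the elementary symmetric functions, of polynomial degrees $1,2,\dots,\mathbf{d}_i$ (hence homological degrees $2,4,\dots,2\mathbf{d}_i$), whence $P(\Lambda_{\mathbf{d}},q)=\prod_{i}\prod_{r=1}^{\mathbf{d}_i}(1-q^{-r})^{-1}=\prod_{i}(q^{-1})_{\mathbf{d}_i}^{-1}$.

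Assembling the three displays (using $P(\Lambda_{\mathbf d},q)|_{q\mapsto q^{-1}}=\prod_i (q)_{\mathbf{d}_i}^{-1}$ together with the identity $(q)_n=(-1)^n q^{\binom{n+1}{2}}(q^{-1})_n$), the desired equality becomes the purely numerical statement
\[
\sum_{i}\mathbf{d}_i+\mathbf{m}\cdot\mathbf{d}+2\deg F_{\mathbf{d}}-\sum_{i}\mathbf{d}_i(\mathbf{d}_i+1)=-\chi(\mathbf{d},\mathbf{d}) .
\]
Plugging in $\deg F_{\mathbf{d}}=\sum_{i}m_{i,i}\binom{\mathbf{d}_i}{2}+\sum_{i<j}m_{i,j}\mathbf{d}_i\mathbf{d}_j$ and $\chi(\mathbf{d},\mathbf{d})=\sum_{i}\mathbf{d}_i^2-\sum_{i,j}m_{i,j}\mathbf{d}_i\mathbf{d}_j$, both sides equal $\sum_{i,j}m_{i,j}\mathbf{d}_i\mathbf{d}_j-\sum_{i}\mathbf{d}_i^2$, which finishes the proof. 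The same computation shows the $x^{\mathbf{d}}$-coefficients are genuinely rational in $q^{\frac12}$, so the identity holds in $\mathbb{Q}(q^{\frac12})[[x_i\colon i\in Q_0]]$ as stated.

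The main obstacle is entirely bookkeeping: there is no conceptual input beyond the functional realization, but one must keep a long chain of sign-and-$q$-power conventions consistent — the Koszul sign rule built into the Poincar\'e series, the reversal of homological degree under the graded dual, the shift $[-\mathbf{m}\cdot\mathbf{d}]$ and the factor $2$ relating polynomial and homological degrees of the $z$-variables, the substitution $x_i\mapsto q^{\frac12}x_i$, and the passage between the two Pochhammer symbols $(q)_n$ and $(q^{-1})_n$. Making the final degree count airtight (e.g. by checking it against the case of the $m$-loop quiver with $\mathbf{d}$ concentrated at one vertex) is the only real work.
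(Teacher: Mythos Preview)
The paper does not actually supply a proof of this proposition; it is quoted from \cite{MR4499100} and preceded only by the sentence ``This realization implies the following result,'' pointing to the functional realization $\mathcal{A}_{Q,\mathbf d}^\vee\cong F_{\mathbf d}\Lambda_{\mathbf d}[-\mathbf m\cdot\mathbf d]$. Your argument carries out precisely that implied computation, and the bookkeeping checks out (in particular $\mathbf m\cdot\mathbf d+2\deg F_{\mathbf d}=\sum_{i,j}m_{i,j}\mathbf d_i\mathbf d_j$ and the resulting exponent equals $-\chi(\mathbf d,\mathbf d)$), so your proof is correct and is exactly the route the paper gestures at.
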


\subsection{Koszul algebras}\label{sec:koszulalg}

We refer the reader to the monograph \cite{PP} for detailed discussion of Koszul duality for associative algebras. For our purposes, the following viewpoint will be sufficient. Let $A$ be a (super)commutative associative algebra which we shall assume to have a weight grading, and, moreover, to be generated by elements of weight grading one. One can construct a multiplicative free resolution of this algebra (a differential graded algebra whose homology is isomorphic to $A$) by applying the construction of ``killing cycles'' of Tate \cite{MR268172,MR86072} and adjoining, in order, generators of homological degree one whose differentials are equal to relations of $A$, then generators of homological degree two whose differentials are equal to basis elements of the module of $1$-cycles, etc. It is common to refer to homological degree in this context as \emph{syzygy degree} (see e.g. \cite{LV}), which is particularly useful in the context like ours where elements of the original algebra have their own internal homological degree that we would not want to mix with the degree of newly added generators. 

If one performs the Tate construction in the most ``economic'' way, it produces the \emph{minimal model} of the algebra $A$, and our algebra is said to be Koszul if the differential is \emph{quadratic}, that is sends every generator into a linear combination of products of pairs of generators. (This implies, inductively, that homological degree of each generator of the thus obtained model is one less than the weight grading of that generator, leading to an equivalent definition of a Koszul algebra \cite{PP}.)  

In \cite{MR4499100}, it was proved that the Euler characteristics of the multigraded components of the minimal model of the algebra $\mathcal{A}_Q$  for every symmetric quiver $Q$ satisfy some relations that would have been satisfied if that algebra were Koszul (one says that $\mathcal{A}_Q$ is ``numerically Koszul''). Moreover, it was established in the same paper that the algebra $\mathcal{A}_Q$ is Koszul if each connected component of $Q$ is either the doubling of the $A_2$ quiver, or is very ``regular'' in the following sense: there exists a positive integer $N$ such that for all vertices $i\ne j$ of that component, we have $m_{ij}=N$, and for each vertex $i$ of that component we have either $m_{ii}=N$ or $m_{ii}=N+1$. It was conjectured that the algebra $\mathcal{A}_Q$ is Koszul for every symmetric quiver $Q$; in this paper, we prove that conjecture. 

\section{Linking and filtrations}\label{sec:linking}

\subsection{Linking}

Let us recall the linking procedure for symmetric quivers introduced in \cite{MR4089349}. Suppose that $Q$ is a symmetric quiver and $a\ne b$ are two elements of $Q_0$. We define the linked quiver $Q^\lk$ as the quiver with the incidence matrix $M^\lk=\left(m^\lk_{i,j}\right)_{i,j\in Q_0\sqcup\{\diamond\}}$, the symmetric matrix with
 \[
m^\lk_{i,j}=
\begin{cases}
\quad\quad m_{i,j}+1,\qquad\qquad\quad  \! \{i,j\}=\{a,b\},\\
\quad m_{i,a}+m_{i,b},\qquad\qquad\quad\!\!  i\in Q_0\setminus\{a,b\}, j=\diamond,\\
\quad m_{j,a}+m_{j,b},\qquad\qquad\quad\!\! i=\diamond ,\in Q_0\setminus\{a,b\},\\
m_{a,a}+m_{b,b}+2m_{a,b},\qquad\!  i=j=\diamond,\\
\quad\qquad m_{i,j},\qquad\qquad\qquad\! \text{ for all other } i,j\in Q_0.
\end{cases}
 \] 

This construction also agrees with motivic generating series in a very transparent way. 

\begin{thm}[{\cite[Sec.~4.5]{MR4089349}}]\label{thm:motivic-linking}
We have the following equality of motivic generating series.
 \[
A_Q(x,q)=A_{Q^\lk}(x,q)|_{x_{\diamond}=x_ax_b}.
 \]
\end{thm}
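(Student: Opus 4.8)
The plan is to compare the two motivic generating series coefficient-wise in the monomials $x^{\mathbf d}$ (with a fixed power of $x_\diamond$ on the right), using the explicit product formula for $A_Q(x,q)$ recalled in Section~\ref{sec:PoSe}. Write $Q_0^\lk = Q_0 \sqcup \{\diamond\}$. A dimension vector for $Q^\lk$ is $(\mathbf d, n)$ with $\mathbf d \in \mathbb Z_{\ge 0}^{Q_0}$ and $n = \mathbf d_\diamond \ge 0$; after the substitution $x_\diamond = x_a x_b$, the coefficient of $x^{\mathbf e}$ in $A_{Q^\lk}(x,q)|_{x_\diamond = x_a x_b}$ is the sum over all $(\mathbf d, n)$ with $\mathbf d_i = \mathbf e_i$ for $i \ne a,b$, $\mathbf d_a + n = \mathbf e_a$ and $\mathbf d_b + n = \mathbf e_b$, of the corresponding summand $(-q^{1/2})^{-\chi^\lk((\mathbf d,n),(\mathbf d,n))} / \prod_{i} (q^{-1})_{(\mathbf d,n)_i}$. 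So the identity reduces to a purely combinatorial identity: for each fixed $\mathbf e$, the single summand of $A_Q$ indexed by $\mathbf e$ equals this finite sum of summands of $A_{Q^\lk}$.

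First I would compute how the exponent $\chi^\lk((\mathbf d,n),(\mathbf d,n))$ of $-q^{1/2}$ changes. Using the formula for $M^\lk$, one expands $\chi^\lk((\mathbf d,n),(\mathbf d,n)) = \sum_{i,j \in Q_0^\lk} \big(\delta_{ij} - m^\lk_{ij}\big)(\mathbf d,n)_i(\mathbf d,n)_j$ (here I am using that the Euler form is symmetric, so I can organize it via the matrix $I - M$). The terms involving $\diamond$, together with the $+1$ correction to $m_{ab}$, should—after substituting $\mathbf d_a = \mathbf e_a - n$, $\mathbf d_b = \mathbf e_b - n$—collapse so that $\chi^\lk$ equals $\chi_Q(\mathbf e, \mathbf e)$ plus a correction term that depends only on $n$ (and on $m_{aa}, m_{bb}, m_{ab}$, the diagonal data that governs the sign rule); I expect a clean outcome of the shape $\chi^\lk((\mathbf d,n),(\mathbf d,n)) = \chi_Q(\mathbf e,\mathbf e) + n^2 + (\text{linear in } n)$, chosen precisely so the quantum factorials line up. The second step is to handle the denominators: $\prod_{i \in Q_0^\lk}(q^{-1})_{(\mathbf d,n)_i} = (q^{-1})_n \cdot (q^{-1})_{\mathbf e_a - n}\,(q^{-1})_{\mathbf e_b - n} \cdot \prod_{i \ne a,b}(q^{-1})_{\mathbf e_i}$. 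Dividing the $\mathbf e$-summand of $A_Q$ by the $(\mathbf d,n)$-summand of $A_{Q^\lk}$ and summing over $n$, the factors $\prod_{i \ne a,b}(q^{-1})_{\mathbf e_i}$ cancel, and one is left with the claim that
\[
\frac{1}{(q^{-1})_{\mathbf e_a}\,(q^{-1})_{\mathbf e_b}}
= \sum_{n \ge 0} \frac{(-q^{1/2})^{-g(n)}}{(q^{-1})_n\,(q^{-1})_{\mathbf e_a - n}\,(q^{-1})_{\mathbf e_b - n}}
\]
for the appropriate quadratic $g(n)$ extracted in step one (with the convention that $(q^{-1})_k = 0$ contributions, i.e.\ $k<0$, drop out, making the sum finite). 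This is a $q$-binomial (Vandermonde-type) identity, provable by the $q$-Chu--Vandermonde summation or directly by induction on $\min(\mathbf e_a,\mathbf e_b)$.

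Alternatively, rather than verifying the $q$-series identity by hand, one can invoke the categorical input already in the excerpt: by Proposition~\ref{prop:poincare series}, $A_Q(x,q) = P(\mathcal A_Q, q^{1/2}x, q)$ and likewise for $Q^\lk$, so it suffices to exhibit a specialization-compatible relation between the Poincar\'e series of $\mathcal A_Q$ and $\mathcal A_{Q^\lk}$ — which is exactly what Theorem~\ref{th:linking-gr} (asserting $\mathcal A_{Q^\lk} \cong \gr \mathcal A_Q$ for a suitable filtration) will provide, since passing to an associated graded algebra preserves graded dimensions. However, since the present statement is quoted from \cite{MR4089349} and is logically prior to the categorification, the self-contained route above via the explicit product formula is the intended one.

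The main obstacle I anticipate is the bookkeeping in step one: correctly tracking the off-diagonal contributions of $\diamond$ to the Euler form and the interaction between the $+1$ shift in $m_{ab}$ and the cross terms $\mathbf d_a \mathbf d_b$, so that the substitution $\mathbf d_a = \mathbf e_a - n$, $\mathbf d_b = \mathbf e_b - n$ really does produce a quadratic function of $n$ alone with the coefficient matching what the $q$-Vandermonde identity needs. Once the exponent $g(n)$ is pinned down, the rest is a standard hypergeometric-type verification and the sign rule plays no essential role beyond keeping the $(-1)$'s consistent (note $m^\lk_{\diamond\diamond} = m_{aa} + m_{bb} + 2m_{ab} \equiv m_{aa}+m_{bb} \pmod 2$, so the parity of the new vertex is the sum of the parities of $a$ and $b$, which is consistent with the substitution $x_\diamond = x_a x_b$).
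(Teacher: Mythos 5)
The paper gives no proof of Theorem~\ref{thm:motivic-linking}: it is imported verbatim from \cite{MR4089349}, so there is no internal argument to compare yours with, and your self-contained route (match coefficients of $x^{\mathbf e}$, sum over $n=\mathbf d_\diamond$, reduce to a $q$-Vandermonde-type identity) is the natural one and essentially the computation behind the cited result. You are also right to reject the ``alternative'' via Proposition~\ref{prop:poincare series} and Theorem~\ref{th:linking-gr}: that would be circular, since the paper's proof of Theorem~\ref{th:linking-gr} uses the present statement as input.

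Two concrete points you must still settle, and one of them is a genuine trap. First, the bookkeeping you defer comes out as
\[
\chi^{\lk}\bigl((\mathbf d,n),(\mathbf d,n)\bigr)=\chi(\mathbf e,\mathbf e)+n^2-2\,\mathbf e_a\mathbf e_b ,
\]
using $m^{\lk}_{i,\diamond}=m_{i,a}+m_{i,b}$ for \emph{all} $i\in Q_0$, including $i\in\{a,b\}$ (the case list defining $M^{\lk}$ omits these two entries, but the computation of $(m^{\ulk})^{\lk}$ in Section~\ref{sec:unlinking} confirms this reading). So the correction is not a function of $n$ alone, as you guessed; the extra term $-2\mathbf e_a\mathbf e_b$ is constant over the sum on $n$, hence harmless, but it must be tracked. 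Second, and more seriously for your plan as written: with the definition of $A_Q$ in Section~\ref{sec:PoSe} and the substitution $x_\diamond=x_ax_b$ taken literally, the resulting sum does \emph{not} close. Already for $Q$ with two vertices and no arrows, the coefficient of $x_ax_b$ on the left is $q^{-1}/(1-q^{-1})^2$, while the right-hand side gives $1/(1-q^{-1})^2-q^{-1/2}/(1-q^{-1})$. The identity does hold with $x_\diamond=q^{1/2}x_ax_b$; equivalently, the statement as printed is the one for the normalization $P(\mathcal A_Q,x,q)=A_Q(q^{-1/2}x,q)$ of Proposition~\ref{prop:poincare series} (Theorem~\ref{thm:motivic-unlinking} carries the same $q^{\pm1/2}$ offset). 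Once the normalization is fixed, your step two reduces exactly to the valid identity
\[
\frac{q^{-\mathbf e_a\mathbf e_b}}{(q^{-1})_{\mathbf e_a}(q^{-1})_{\mathbf e_b}}
=\sum_{n\ge0}\frac{(-1)^n\,q^{-\binom{n}{2}}}{(q^{-1})_n\,(q^{-1})_{\mathbf e_a-n}\,(q^{-1})_{\mathbf e_b-n}},
\]
which follows from $q$-Chu--Vandermonde or a short induction on $\min(\mathbf e_a,\mathbf e_b)$, so the endgame is as you predicted. In summary: right method, but your write-up is a plan rather than a proof --- the decisive exponent computation is only ``expected'' (and the expected form is slightly off), and executed literally against the printed statement it would appear to fail for a reason that is notational rather than mathematical; both points need to be pinned down before the argument is complete.
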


\subsection{The filtration theorem}\label{sec:filtr-thm}

We shall now explain an algebraic construction that categorifies linking to a certain extent. Let $a\ne b\in Q_0$ be the vertices used in the linking procedure above. We shall consider the formal generating series 
 \[
e_{\diamond}(z)=e_a(z)\frac{d^{m_{a,b}}\phantom{z}}{dz^{m_{a,b}}}e_b(z) 
 \]
and use it to introduce a filtration $F_\bullet$ on the algebra $\mathcal{A}_Q$ by putting all generators $e_{i,k}$ and all coefficients $e_{\diamond,k}$ of $e_{\diamond}(z)$ in the first filtration component $F_1\mathcal{A}_Q$, and defining the component $F_p\mathcal{A}_Q$ as the vector space spanned by products of at most $p$ factors from $F_1\mathcal{A}_Q$. This is an exhaustive increasing filtration of $\mathcal{A}_Q$ which is clearly compatible with the product: $F_{p_1}\mathcal{A}_QF_{p_2}\mathcal{A}_Q\subset F_{p_1+p_2}\mathcal{A}_Q$. Thus, the associated graded object
 \[
\gr_{F}\mathcal{A}_Q = \bigoplus_{p\ge 0} F_p\mathcal{A}_Q/F_{p-1}\mathcal{A}_Q
 \]
has a well defined algebra structure equipped with the extra weight grading arising from the filtration, and that algebra is generated by (the cosets of) the original generators $e_{i,k}$ and all elements $e_{\diamond,k}$, which all have the new weight grading equal to one.  

\begin{prop}
The following relations are satisfied in $\gr_{F}\mathcal{A}_Q$:
 \[
e_i(z)\frac{d^p\phantom{z}}{dz^p}e_j(z)=0\quad
\begin{cases}
\text{ for } i=a, j=b, \text{ and } 0\le p < m_{a,b}+1,\\
\text{ for } i\in Q_0, j=\diamond,\text{ and } 0\le p < m_{i,a}+m_{i,b},\\
\text{ for }  i=j=\diamond, \text{ and } 0\le p < m_{a,a}+m_{b,b}+2m_{a,b},\\
\text{ for all other } i,j\in Q_0 \text{ and } 0\le p < m_{i,j}.
\end{cases}
 \]

\end{prop}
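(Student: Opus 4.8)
The plan is to verify the four families of relations directly in $\gr_F\mathcal{A}_Q$ by lifting each one to an identity in $\mathcal{A}_Q$ that is homogeneous (or filtered) of the appropriate weight. Recall that in $\gr_F\mathcal{A}_Q$ the weight-one piece is spanned by the images of the $e_{i,k}$ ($i\in Q_0$) together with the coefficients $e_{\diamond,k}$ of $e_\diamond(z)=e_a(z)\frac{d^{m_{a,b}}}{dz^{m_{a,b}}}e_b(z)$; so a relation of the form $e_i(z)\frac{d^p}{dz^p}e_j(z)=0$ with $i,j\in Q_0\sqcup\{\diamond\}$ asserts the vanishing of an element of weight at most $2$ in $\gr_F\mathcal{A}_Q$, i.e. that a certain product of two $F_1$-generators actually lies in $F_1\mathcal{A}_Q$. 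Concretely: a weight-$\le1$ element of $\mathcal{A}_Q$ that can also be written as a product of two $F_1$-generators has zero class in $\gr_F\mathcal{A}_Q$. So for each of the four cases I would exhibit the relevant product of generating series, apply the relations of $\mathcal{A}_Q$ from Section~\ref{sec:quad-alg} (both the supercommutativity relations and, crucially, the enlarged system $\frac{d^p}{dz^p}e_i(z)\frac{d^q}{dz^q}e_j(z)=0$ for $0\le p+q<m_{i,j}$), and check that the product either vanishes already in $\mathcal{A}_Q$ or is congruent modulo $F_1$ to something of strictly smaller weight.

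First, the ``other'' vertices. If $i,j\in Q_0$ and $\{i,j\}\ne\{a,b\}$, then already in $\mathcal{A}_Q$ we have $e_i(z)\frac{d^p}{dz^p}e_j(z)=0$ for $0\le p<m_{i,j}$, and since $m^\lk_{i,j}=m_{i,j}$ this passes verbatim to the associated graded. Next, the case $i=a$, $j=b$, $0\le p<m_{a,b}+1$: for $p\le m_{a,b}-1$ the relation already holds in $\mathcal{A}_Q$; the only new content is $p=m_{a,b}$, where $e_a(z)\frac{d^{m_{a,b}}}{dz^{m_{a,b}}}e_b(z)=e_\diamond(z)$ is by definition a series of $F_1$-generators — hence has weight $1<2$ in $\gr_F\mathcal{A}_Q$, so it is zero there as an element of weight-$2$ component. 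Wait — more carefully: the assertion ``$e_a(z)\frac{d^{m_{a,b}}}{dz^{m_{a,b}}}e_b(z)=0$ in $\gr_F$'' means that the product of the two weight-one classes $[e_a]$ and $[e_b]$ vanishes in weight two; and indeed $e_ae_b^{(m_{a,b})}=e_\diamond\in F_1\mathcal{A}_Q$, so its image in $F_2/F_1$ is $0$. The case $i=j=\diamond$, $0\le p<m_{a,a}+m_{b,b}+2m_{a,b}$: here $e_\diamond(z)\frac{d^p}{dz^p}e_\diamond(z)$ is a product of two weight-one $F_1$-generators, and one checks that $e_\diamond(z)\cdot\frac{d^p}{dz^p}e_\diamond(z)$, expanded via the Leibniz rule into a sum of terms $e_a^{(\alpha)}e_b^{(\beta)}e_a^{(\gamma)}e_b^{(\delta)}$ with $\alpha+\gamma$ and $\beta+\delta$ controlled, lies in $F_1\mathcal{A}_Q$ whenever $p<m_{a,a}+m_{b,b}+2m_{a,b}$ — this is the bookkeeping heart of the argument and should follow from repeatedly applying the relations $e_a^{(\alpha)}e_a^{(\gamma)}=0$ for $\alpha+\gamma<m_{a,a}$, $e_b^{(\beta)}e_b^{(\delta)}=0$ for $\beta+\delta<m_{b,b}$, and $e_a^{(\bullet)}e_b^{(\bullet)}=0$ below $m_{a,b}$, until what remains is recognizably a multiple of the generating series of $e_\diamond$'s or vanishes. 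The mixed case $i\in Q_0\setminus\{a,b\}$, $j=\diamond$, $0\le p<m_{i,a}+m_{i,b}$ is analogous: expand $e_i(z)\frac{d^p}{dz^p}\bigl(e_a(z)e_b^{(m_{a,b})}(z)\bigr)$ by Leibniz and use $e_i^{(\alpha)}e_a^{(\gamma)}=0$ for $\alpha+\gamma<m_{i,a}$ and $e_i^{(\beta)}e_b^{(\delta)}=0$ for $\beta+\delta<m_{i,b}$.

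The main obstacle I anticipate is the combinatorial verification in the two ``composite'' cases ($j=\diamond$): one must show that every monomial appearing after applying the Leibniz rule to $e_i(z)\frac{d^p}{dz^p}e_\diamond(z)$ or $e_\diamond(z)\frac{d^p}{dz^p}e_\diamond(z)$ either is killed by one of the original quadratic relations of $\mathcal{A}_Q$ or reassembles (modulo lower weight, using supercommutativity to move factors past each other with the correct Koszul signs) into a term of $F_1\mathcal{A}_Q$, precisely when $p$ is below the stated bound $m^\lk_{i,\diamond}$ or $m^\lk_{\diamond,\diamond}$. The bound $m_{i,a}+m_{i,b}$ (resp.\ $m_{a,a}+m_{b,b}+2m_{a,b}$) is exactly what makes a pigeonhole/convexity argument work: in any split of the $p$ derivatives among the $2$ (resp.\ $4$) factors, at least one pair of factors falls below its own threshold. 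Organizing this cleanly — perhaps by inducting on $p$ and peeling off one derivative at a time, using $\frac{d}{dz}\bigl(e_i(z)e_\diamond^{(p-1)}(z)\bigr)=e_i'(z)e_\diamond^{(p-1)}(z)+e_i(z)e_\diamond^{(p)}(z)$ together with the already-established lower cases — is the step requiring the most care, though it involves no conceptual difficulty beyond careful index tracking. Sign subtleties from the Koszul rule when $m_{a,a}$, $m_{b,b}$ are odd will need to be handled but are routine.
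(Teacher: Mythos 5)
Your treatment of the first and fourth families matches the paper: for $\{i,j\}\ne\{a,b\}$ the relations hold verbatim in $\mathcal{A}_Q$, and for $i=a$, $j=b$, $p=m_{a,b}$ the product is by definition $e_\diamond(z)\in F_1$, hence zero in $F_2/F_1$. The gap is in the two composite families $j=\diamond$. First, a small but clarifying point: in the relevant multidegrees ($\alpha_i+\alpha_a+\alpha_b$ and $2\alpha_a+2\alpha_b$) the subspace $F_1\mathcal{A}_Q$ is zero, so ``lies in $F_1$ modulo lower weight'' is not a weaker target --- you must prove these identities hold on the nose in $\mathcal{A}_Q$, which is exactly what the paper proves. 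Second, and more seriously, the mechanism you propose for this --- expand by Leibniz and argue by pigeonhole that in any split of the $p$ derivatives ``at least one pair of factors falls below its own threshold'' --- is false. Take $m_{a,a}=m_{b,b}=0$, $m_{a,b}=1$ and $p=0$: the single term
\[
e_a(z)\,e_b'(z)\,e_a(z)\,e_b'(z)
\]
has no pair of factors covered by a quadratic relation of $\mathcal{A}_Q$ (the relations would require total derivative order $<m_{a,a}=0$, $<m_{b,b}=0$, or $<m_{a,b}=1$, none of which holds), yet the claimed relation with $p=0<m_{a,a}+m_{b,b}+2m_{a,b}=2$ asserts that this element vanishes. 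The vanishing is a genuine consequence of the quadratic relations, but it is a statement about the weight-three and weight-four components of the quadratic algebra, which cannot be decided by applying pairwise relations monomial-by-monomial in one chosen expansion.

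The paper closes exactly this gap by invoking the functional realization of the graded dual $\mathcal{A}_Q^\vee$ (the description of each component as multisymmetric polynomials divisible by $F_\mathbf{d}$): to kill $e_\diamond(z)\frac{d^p}{dz^p}e_\diamond(z)$ one evaluates an arbitrary functional $\xi$ on $e_a(z_1)e_a(z_2)e_b(w_1)e_b(w_2)$, applies $\frac{d^{m_{a,b}}}{dw_1^{m_{a,b}}}\frac{d^{m_{a,b}}}{dw_2^{m_{a,b}}}$, sets $w_1=z_1$, $w_2=z_2$, and observes that the divisibility by $(z_1-z_2)^{m_{a,a}}(w_1-w_2)^{m_{b,b}}\prod(z_r-w_{r'})^{m_{a,b}}$ forces the result to vanish to order $m_{a,a}+m_{b,b}+2m_{a,b}$ at $z_1=z_2$; the mixed case $i\in Q_0$, $j=\diamond$ is handled by the same kind of vanishing-order count in several variables. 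If you want to complete your argument you should replace the term-by-term pigeonhole with such a dual/vanishing-order argument (or some other control of the higher-weight components of $\mathcal{A}_Q$); as sketched, the ``bookkeeping heart'' of your proof does not go through.
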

\begin{proof}
The first group of relations is satisfied since we defined
\[
e_{\diamond}(z)=e_a(z)\frac{d^{m_{a,b}}\phantom{z}}{dz^{m_{a,b}}}e_b(z), 
 \]
thus the coefficients of the series $e_a(z)\frac{d^{m_{a,b}}\phantom{z}}{dz^{m_{a,b}}}e_b(z)$ (elements of $F_2\mathcal{A}_Q$), being equal to $e_{\diamond,k}$, vanish in the quotient $F_2\mathcal{A}_Q/F_1\mathcal{A}_Q$.

All the remaining relations already hold on the nose in $\mathcal{A}_Q$, which one can easily see for each of them using the functional realization of the graded dual space. As an example, let us prove that the relations
 \[
e_\diamond(z)\frac{d^p\phantom{z}}{dz^p}e_\diamond(z)=0, \qquad 0\le p < m_{a,a}+m_{b,b}+2m_{a,b}
 \]
are satisfied. In algebra $\mathcal{A}_Q$, let us consider the subspace spanned by the coefficients of the formal power series 
 \[
e_a(z_1)e_a(z_2)e_b(w_1)e_b(w_2).
 \]  
In order to show that $e_\diamond(z)\frac{d^p\phantom{z}}{dz^p}e_\diamond(z)$ vanishes for some $p$, 
we should show that all linear functions from the graded dual vector space vanish on that element. 

We know that the functional realization assigns to a linear function $\xi$ the element
\[
\xi(e_a(z_1)e_a(z_2)e_b(w_1)e_b(w_2))=\pm\xi(e_a(z_1)e_b(w_1)e_a(z_2)e_b(w_2)),
\]
so evaluating $\xi$ on $e_\diamond(z)\frac{d^p\phantom{z}}{dz^p}e_\diamond(z)$ amounts, up to a sign, to first computing   
 \[
\left(\frac{d^{m_{a,b}}\phantom{z}}{dw_1^{m_{a,b}}}\frac{d^{m_{a,b}}\phantom{z}}{dw_2^{m_{a,b}}}\xi(e_a(z_1)e_a(z_2)e_b(w_1)e_b(w_2))\right)_{w_1=z_1,w_2=z_2},
 \]
and then examining the order of vanishing of that element for $z_1=z_2$.
Since we know that the functional realization of corresponding component of the graded dual vector (for the degree $2\alpha_a+2\alpha_b$) consists of multi-symmetric polynomials divisible by
 \[
(z_1-z_2)^{m_{a,a}}(w_1-w_2)^{m_{b,b}}
(z_1-w_1)^{m_{a,b}}(z_1-w_2)^{m_{a,b}}(z_2-w_1)^{m_{a,b}}
(z_2-w_2)^{m_{a,b}},
 \]
applying $\frac{d^{m_{a,b}}\phantom{z}}{dw_1^{m_{a,b}}}\frac{d^{m_{a,b}}\phantom{z}}{dw_2^{m_{a,b}}}$ and setting $w_1=z_1$, $w_2=z_2$ may give a non-zero result only if we apply all the derivatives to the product $(z_1-w_1)^{m_{a,b}}(z_2-w_2)^{m_{a,b}}$. Then this product disappears, and after setting $w_1=z_1$, $w_2=z_2$ we obtain a factor $(z_1-z_2)^{m_{a,a}+m_{b,b}+2m_{a,b}}$, as desired.
\end{proof}

Since the algebra $\gr \mathcal{A}_Q$ is generated by (the cosets of) the original generators $e_{i,k}$ and all elements $e_{\diamond,k}$, the result we just proved implies that there is a surjective algebra morphism 
 \[
\pi_\lk\colon \mathcal{A}_{Q^\lk}\twoheadrightarrow \gr \mathcal{A}_Q.
 \]
We are now ready to state and prove the main result of this section.

\begin{thm}\label{th:linking-gr}
The algebra morphism $\pi_\lk$ implements an isomorphism
 \[
\mathcal{A}_{Q^\lk}\cong \gr \mathcal{A}_Q.
 \]
\end{thm}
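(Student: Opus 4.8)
The plan is to prove that the surjection $\pi_\lk$ is an isomorphism by a dimension count, using the functional realization of graded duals on both sides. Since we already have a surjective algebra map $\pi_\lk\colon \mathcal{A}_{Q^\lk}\twoheadrightarrow \gr\mathcal{A}_Q$, and both algebras live in the category $\mathcal{C}$ with finite-dimensional multigraded components, it suffices to show that the Poincar\'e series (equivalently, the multigraded dimensions) of $\mathcal{A}_{Q^\lk}$ and $\gr\mathcal{A}_Q$ coincide. Passing to the associated graded object for a filtration does not change Poincar\'e series, so $P(\gr\mathcal{A}_Q, x, q) = P(\mathcal{A}_Q, x, q)$; we must therefore identify $P(\mathcal{A}_Q,x,q)$ with $P(\mathcal{A}_{Q^\lk}, x, q)$ after the specialization that reflects the degree of $e_\diamond$.

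First I would set up the bookkeeping carefully. The generators $e_{\diamond,k}$ of $\mathcal{A}_{Q^\lk}$ have degree $(\alpha_\diamond, -2k - m^\lk_{\diamond,\diamond}) = (\alpha_\diamond, -2k - m_{a,a} - m_{b,b} - 2m_{a,b})$, while the coefficient $e_{\diamond,k}$ of $e_\diamond(z) = e_a(z)\tfrac{d^{m_{a,b}}}{dz^{m_{a,b}}}e_b(z)$ inside $\mathcal{A}_Q$, regarded as an element of $F_1/F_0$ of the new weight grading, carries $L\times\mathbb{Z}$-degree $(\alpha_a + \alpha_b, -2k - m_{a,a} - m_{b,b} - 2m_{a,b})$. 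Thus the identification of Poincar\'e series must be made precise as
\[
P(\mathcal{A}_{Q^\lk}, x, q)\big|_{x_\diamond = x_a x_b} = P(\mathcal{A}_Q, x, q),
\]
which by Proposition~\ref{prop:poincare series} is equivalent to $A_{Q^\lk}(x,q)|_{x_\diamond = x_a x_b} = A_Q(x,q)$, precisely Theorem~\ref{thm:motivic-linking}. So the numerical equality is already in hand, and since $\pi_\lk$ is a surjection of objects of $\mathcal{C}$ whose source and target have equal Poincar\'e series in each multidegree, it must be injective, hence an isomorphism.

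The point requiring genuine care --- and what I expect to be the main obstacle --- is justifying that $\pi_\lk$ is a map \emph{within $\mathcal{C}$ of the correct multidegree}, i.e.\ that the weight grading on $\gr_F\mathcal{A}_Q$ matches the grading on $\mathcal{A}_{Q^\lk}$ under which Poincar\'e series are being compared. Concretely, one must check that no collapsing of distinct multidegrees occurs: the subtlety is that in $\gr_F\mathcal{A}_Q$ the class $e_{\diamond,k}$ and a product $e_{a,k'}e_{b,k''}$ (with $k' + k'' + (\text{contribution of the }m_{a,b}\text{ derivatives}) = k$) could a priori have the same $L\times\mathbb{Z}$-degree but different filtration weight, so one needs the extra weight grading from the filtration to be recorded as an honest extra grading and the surjection to respect it. Once this is set up, the comparison is forced: $\pi_\lk$ sends the weight-one generators $e_{i,k}, e_{\diamond,k}$ of $\mathcal{A}_{Q^\lk}$ to the weight-one generators of $\gr_F\mathcal{A}_Q$, hence is graded for the weight grading, and in each fixed (weight, $L$, homological) trigrade both sides are finite-dimensional with equal dimension by the computation above, so surjectivity upgrades to bijectivity. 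I would also double-check the one place where the functional realization was invoked in the preceding Proposition --- that the listed relations of $Q^\lk$ are exactly the relations holding in $\gr_F\mathcal{A}_Q$, with nothing extra --- but that is exactly what the equality of Poincar\'e series now certifies a posteriori.
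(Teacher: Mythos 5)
Your proposal is correct and follows essentially the same route as the paper: surjectivity of $\pi_\lk$ plus equality of multigraded dimensions, the latter obtained by combining Theorem \ref{thm:motivic-linking} with Proposition \ref{prop:poincare series} under the specialization $x_\diamond=x_ax_b$ (equivalently, reducing the $L^\lk$-grading to the $L$-grading via $\alpha_\diamond\mapsto\alpha_a+\alpha_b$), and using that passing to $\gr_F$ preserves dimensions. The extra worry about the filtration weight grading is not needed — a surjection of finite-dimensional $L\times\mathbb{Z}$-graded components of equal dimension is already an isomorphism — but it does no harm.
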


\begin{proof}
Let us note that since we defined
 \[
e_{\diamond}(z)=e_a(z)\frac{d^{m_{a,b}}\phantom{z}}{dz^{m_{a,b}}}e_b(z),  
 \]
in algebra $\mathcal{A}_Q$ we have 
 \[
\deg(e_{\diamond,k})=(\alpha_a+\alpha_b,-2(k+m_{a,b})-m_{a,a}-m_{b,b})=(\alpha_a+\alpha_b,-2k-m^\lk_{\diamond,\diamond}). 
 \]
Thus, if we want the algebra morphism $\pi_\lk$ to respect the gradings, we cannot use the full $L^\lk$-degree on the algebra 
$\mathcal{A}_{Q^\lk}$ but rather reduce it to $L$-degree, postulating that the $L$-degree of the components of $e_\diamond(z)$ is $\alpha_a+\alpha_b$. On the level of the Poincar\'e series, this amounts to the substitution $x_{\diamond}=x_ax_b$.

Let us now remark that according to Theorem \ref{thm:motivic-linking} and Proposition \ref{prop:poincare series}, we have
 \[
P(\mathcal{A}_Q,q^{\frac12}x,q)=A_Q(x,q)=A_{Q^\lk}(x,q)|_{x_{\diamond}=x_ax_b}=P(\mathcal{A}_{Q^\lk},q^{\frac12}x,q)|_{x_{\diamond}=x_ax_b},
 \]
implying that we have the equality of dimensions of all respective $L\times\mathbb{Z}$-graded components of the algebras $\mathcal{A}_{Q^\lk}$ and $\mathcal{A}_Q$. For each value of $L\times\mathbb{Z}$-grading, the map $\pi_\lk$ is thus a surjective map of finite-dimensional vector spaces of the same dimension, which therefore has to be an isomorphism. 
\end{proof}

\section{Unlinking and partial resolutions}\label{sec:unlinking}

\subsection{Unlinking}

Let us recall the unlinking procedure for symmetric quivers introduced in \cite{MR4089349}. Suppose that $Q$ is a symmetric quiver and $a\ne b$ are two elements of $Q_0$ such that $m_{a,b}>0$. We define the unlinked quiver $Q^\ulk$ as the quiver with the incidence matrix $M^\ulk=\left(m^\ulk_{i,j}\right)_{i,j\in Q_0\sqcup\{\star\}}$, the symmetric matrix with
 \[
m^\ulk_{i,j}=\quad
\begin{cases}
\quad\quad m_{i,j}-1,\qquad\qquad\qquad\,\, \{i,j\}=\{a,b\},\\
\quad m_{a,a}+m_{a,b}-1,\qquad\qquad  \{i,j\}=\{a,\star\},\\
\quad m_{b,b}+m_{a,b}-1,\qquad\qquad\, \{i,j\}=\{b,\star\},\\
\quad m_{i,a}+m_{i,b},\qquad\qquad\qquad\!  i\in Q_0\setminus\{a,b\}, j=\star,\\
\quad m_{j,a}+m_{j,b},\qquad\qquad\qquad  i=\star, j\in Q_0\setminus\{a,b\},\\
m_{a,a}+m_{b,b}+2m_{a,b}-1,\quad\,\,  i=j=\star,\\
\quad\qquad m_{i,j},\qquad\qquad\qquad\quad \text{ for all other } i,j\in Q_0.
\end{cases}
 \] 

The importance of this construction is explained by the following result (note that we use a different convention for the variable $q$, hence a slight discrepancy with the quoted result of \cite{MR4089349}). 

\begin{thm}[{\cite[Sec.~4.2]{MR4089349}}]\label{thm:motivic-unlinking}
We have the following equality of motivic generating series.
 \[
A_Q(x,q)=A_{Q^\ulk}(x,q)|_{x_{\star}=q^{-\frac12}x_ax_b}.
 \]
\end{thm}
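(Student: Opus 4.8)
The plan is to prove the identity coefficientwise in the variables $x_i$, $i\in Q_0$, reducing it to an elementary, explicit family of $q$-series identities. Writing $A_{Q^\ulk}(x,q)$ as a sum over $\mathbf{d}'=(\mathbf{d},d_\star)\in\mathbb{Z}_{\ge0}^{Q_0\sqcup\{\star\}}$ and performing the substitution $x_\star=q^{-\frac12}x_ax_b$, the term indexed by $\mathbf{d}'$ contributes to the monomial $x^{\mathbf{e}}$ with $\mathbf{e}_a=\mathbf{d}_a+d_\star$, $\mathbf{e}_b=\mathbf{d}_b+d_\star$ and $\mathbf{e}_i=\mathbf{d}_i$ for $i\ne a,b$, together with a scalar factor coming from the substitution. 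Fixing $\mathbf{e}\in\mathbb{Z}_{\ge0}^{Q_0}$ and summing over $0\le d_\star\le\min(\mathbf{e}_a,\mathbf{e}_b)$, the $q$-Pochhammer factors $(q^{-1})_{\mathbf{e}_i}$ with $i\ne a,b$ cancel, so the content of the theorem becomes a single identity involving only $\mathbf{e}_a$, $\mathbf{e}_b$ and the summation index $d_\star$, to be established for all $\mathbf{e}_a,\mathbf{e}_b\ge0$.

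The next step is the bookkeeping of exponents. Substituting $\mathbf{d}_a=\mathbf{e}_a-d_\star$, $\mathbf{d}_b=\mathbf{e}_b-d_\star$, $\mathbf{d}'_\star=d_\star$ into the Euler form of $Q^\ulk$ and expanding using the entries of $M^\ulk$ displayed above, all the cross terms with spectator vertices reassemble into $\chi_Q$ (the contributions of $d_\star$ to those terms cancel precisely because $m^\ulk_{i,\star}=m_{i,a}+m_{i,b}$), and a short computation in the $\{a,b,\star\}$-block yields
\[
\chi_{Q^\ulk}(\mathbf{d}',\mathbf{d}')=\chi_Q(\mathbf{e},\mathbf{e})+2(\mathbf{e}_a-d_\star)(\mathbf{e}_b-d_\star).
\]
Feeding this into the coefficientwise equality and using $(-q^{\frac12})^{-2N}=q^{-N}$ for integers $N$, the factor $(-q^{\frac12})^{-\chi_Q(\mathbf{e},\mathbf{e})}$ drops out of both sides, and what remains is a terminating $q$-hypergeometric summation, which after passing between the conventions $(q^{-1})_n$ and $(q)_n$ via $(q^{-1})_n=(-1)^nq^{-\binom{n+1}{2}}(q)_n$ takes the clean form $\sum_{d_\star}(-1)^{d_\star}q^{\binom{d_\star}{2}}(q)_{d_\star}\binom{\mathbf{e}_a}{d_\star}_q\binom{\mathbf{e}_b}{d_\star}_q=q^{\mathbf{e}_a\mathbf{e}_b}$. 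This is a classical identity — a terminating case of the $q$-Chu--Vandermonde summation — and it can also be proved directly by induction on $\min(\mathbf{e}_a,\mathbf{e}_b)$ via the recurrence $(q)_n=(1-q^n)(q)_{n-1}$.

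The main obstacle is the last mile of this bookkeeping: one must track precisely the signs hidden in the factors $(-q^{\frac12})^{\bullet}$, the half-integer powers of $q$, and the exact power of $q$ contributed by the substitution $x_\star=q^{-\frac12}x_ax_b$ — this is where the ``different convention for $q$'' alluded to in the statement enters, since the homological degree of $e_{\star,k}$ differs by one from that of the corresponding generator of the linked quiver — so as to land on \emph{exactly} the classical identity rather than merely one of the same shape. Once the reduction is set up correctly the remaining verification is routine; a normalization-agnostic alternative, which I would fall back on if matching conventions proves delicate, is to run the induction of the previous paragraph directly in the paper's own normalization using $(q^{-1})_n=(1-q^{-n})(q^{-1})_{n-1}$, bypassing the classical identity altogether.
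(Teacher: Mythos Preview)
The paper does not give its own proof of this statement: it is quoted as a theorem of Ekholm--Kucharski--Longhi, with only a parenthetical remark that the $q$-convention used here differs from that of the original reference. There is therefore nothing in the present paper to compare your argument against.

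Your strategy---matching coefficients of $x^{\mathbf e}$ on both sides and reducing to a terminating $q$-summation in the single variable $d_\star$---is precisely the method of the cited reference, so you are in effect reconstructing that argument. The Euler-form computation
\[
\chi_{Q^\ulk}(\mathbf d',\mathbf d')=\chi_Q(\mathbf e,\mathbf e)+2(\mathbf e_a-d_\star)(\mathbf e_b-d_\star)
\]
is correct, and the spectator-vertex cancellation (via $m^\ulk_{i,\star}=m_{i,a}+m_{i,b}$ for $i\notin\{a,b\}$) works exactly as you say. Your explicit caution about the ``last mile'' is well placed: when one actually chases the factor $q^{-d_\star/2}$ from the substitution together with the conversion $(q^{-1})_n=(-1)^nq^{-\binom{n+1}{2}}(q)_n$, the exponent of $q$ in the summand does not land exactly on $\binom{d_\star}{2}$, so the displayed ``clean form'' is not quite the identity one obtains in this paper's normalization. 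This is exactly the convention-matching issue the authors flag, and the fallback you propose---running the induction directly with the recursion $(q^{-1})_n=(1-q^{-n})(q^{-1})_{n-1}$ and never leaving the paper's conventions---is the safe way to close the argument.
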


\subsection{The partial resolution theorem}\label{sec:partresol}

We shall now explain an algebraic construction that categorifies unlinking to a certain extent. Let $a\ne b\in Q_0$ be the vertices used in the unlinking procedure above, where we assume $m_{a,b}>0$. This means that one of the relations of the algebra $\mathcal{A}_Q$ is
 \[
e_a(z)\frac{d^{m_{a,b}-1}\phantom{z}}{dz^{m_{a,b}-1}}e_b(z)=0.
 \]
Let us use the quiver $Q^\ulk$ to construct the algebra $\mathcal{A}_{Q^\ulk}$ according to our general recipe. In that algebra, the formal series
 \[
e_a(z)\frac{d^{m_{a,b}-1}\phantom{z}}{dz^{m_{a,b}-1}}e_b(z)
 \]
does not vanish, and its coefficient of $z^k$ has the multidegree 
 \[
(\alpha_a+\alpha_b,-2(k+m_{a,b}-1)-m_{a,a}-m_{b,b})=(\alpha_a+\alpha_b,-2k-(m_{\star,\star}-1)). 
 \]

\begin{prop} \label{prop:deriv}
There is a unique odd derivation $\partial\colon \mathcal{A}_{Q^\ulk}\to \mathcal{A}_{Q^\ulk}$ for which
 \[
\partial(e_{\star}(z))= e_a(z)\frac{d^{m_{a,b}-1}\phantom{z}}{dz^{m_{a,b}-1}}e_b(z), \quad \partial(e_i(z))=0, i\in Q_0
 \]
component-wise. Moreover, we have $\partial^2=0$.
\end{prop}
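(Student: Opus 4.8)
The plan is to construct $\partial$ as a free-algebra derivation on the generators $e_{i,k}$, $e_{\star,k}$ of $\mathcal{A}_{Q^\ulk}$ and then check that it descends to the quotient by the defining ideal. Since $\mathcal{A}_{Q^\ulk}$ is (super)commutative and generated by these elements, specifying an odd derivation on the generators determines at most one derivation, which gives uniqueness automatically; the content is existence. First I would work in the free supercommutative algebra on the generators, define $\partial$ on generators by the stated formulas (extracting coefficients of $z^k$ from $e_a(z)\frac{d^{m_{a,b}-1}}{dz^{m_{a,b}-1}}e_b(z)$ for the $e_{\star,k}$, and zero on the $e_{i,k}$), extend by the odd Leibniz rule, and verify that the homological degree works out: the coefficient of $z^k$ in $e_a(z)\frac{d^{m_{a,b}-1}}{dz^{m_{a,b}-1}}e_b(z)$ has multidegree $(\alpha_a+\alpha_b, -2k-(m_{\star,\star}-1))$ by the computation just before the statement, while $\deg(e_{\star,k}) = (\alpha_a+\alpha_b, -2k-m_{\star,\star})$, so $\partial$ raises homological degree by $1$; hence it is a differential of the correct (odd) degree on the free algebra, and $L$-degree is preserved.

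Next I would check that $\partial$ preserves the relations ideal $I$ of $\mathcal{A}_{Q^\ulk}$, so that it induces a derivation on the quotient. Because $\partial$ is a derivation, it suffices to check $\partial(r) \in I$ for each generating relation $r$. The supercommutativity relations (coefficients of $e_i(z)e_j(w) - (-1)^{m^\ulk_{ii}m^\ulk_{jj}}e_j(w)e_i(z)$) are handled formally: $\partial$ applied to a supercommutator of two series is again a combination of supercommutators, using that $\partial$ is an odd derivation and that the relevant parities match — the case involving $e_\star$ requires noting $\partial(e_\star(z))$ has parity $m_{a,a}+m_{b,b} \equiv m^\ulk_{\star,\star}+1 \pmod 2$ so that the sign bookkeeping is consistent. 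For the quadratic differential relations $\frac{d^p}{dz^p}e_i(z)\frac{d^q}{dz^q}e_j(z) = 0$ with $p+q < m^\ulk_{i,j}$, the only ones on which $\partial$ acts nontrivially are those involving the vertex $\star$. For a relation with $j=\star$, $i\in Q_0$, we get $\partial$ of it equal (up to sign) to $\frac{d^p}{dz^p}e_i(z)\frac{d^q}{dz^q}\bigl(e_a(z)\frac{d^{m_{a,b}-1}}{dz^{m_{a,b}-1}}e_b(z)\bigr)$, which by the Leibniz rule is a sum of terms $\frac{d^{p}}{dz^p}e_i(z)\cdot\frac{d^{q'}}{dz^{q'}}e_a(z)\cdot\frac{d^{q''}}{dz^{q''}}e_b(z)$ with $q'+q'' = q+m_{a,b}-1$; I would verify that in each such term one of the two pairs $(i,a)$ or $(i,b)$ has its derivative-order sum below the corresponding $m_{i,a}$ or $m_{i,b}$, using $p+q < m^\ulk_{i,\star} = m_{i,a}+m_{i,b}$ — a pigeonhole-type estimate on the partition of the derivative orders. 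The case $i=j=\star$ is the analogous but heavier computation, expanding $\partial(e_\star(z)\frac{d^p}{dz^p}e_\star(z))$ into products of four series $\frac{d^{a_1}}{dz^{a_1}}e_a \cdot \frac{d^{a_2}}{dz^{a_2}}e_b \cdot \frac{d^{a_3}}{dz^{a_3}}e_a \cdot \frac{d^{a_4}}{dz^{a_4}}e_b$ and checking each lands in the ideal via the known relations among $e_a, e_b$; the functional-realization viewpoint (order-of-vanishing divisibility by $F_\mathbf{d}$) used in the proof of the preceding Proposition is the cleanest tool here.

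Finally, $\partial^2 = 0$: since $\partial^2$ is an even derivation (the square of an odd derivation is a derivation), it is enough to check it vanishes on generators. On $e_i(z)$ it is zero trivially. On $e_\star(z)$ we have $\partial^2(e_\star(z)) = \partial\bigl(e_a(z)\frac{d^{m_{a,b}-1}}{dz^{m_{a,b}-1}}e_b(z)\bigr)$, which by the Leibniz rule and $\partial(e_a) = \partial(e_b) = 0$ is zero. So $\partial^2 = 0$ on generators, hence identically.

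I expect the main obstacle to be the verification that $\partial$ preserves the relation $e_\star(z)\frac{d^p}{dz^p}e_\star(z) = 0$ for $p < m^\ulk_{\star,\star} = m_{a,a}+m_{b,b}+2m_{a,b}-1$: after applying $\partial$ one must show a sum of quartic expressions in the derivatives of $e_a$ and $e_b$ lies in the ideal generated by the (lower) relations $\frac{d^{p'}}{dz^{p'}}e_a\frac{d^{q'}}{dz^{q'}}e_b = 0$ ($p'+q' < m_{a,b}$), $\frac{d^{p'}}{dz^{p'}}e_a\frac{d^{q'}}{dz^{q'}}e_a = 0$ ($p'+q'<m_{a,a}$), and similarly for $b$; getting the combinatorics of the derivative-order partitions to always fall under one of these thresholds, with the correct signs coming from the Koszul rule, is the delicate point, and the functional-realization bound on orders of vanishing is the way I would make it rigorous.
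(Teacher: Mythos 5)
Your overall strategy coincides with the paper's: uniqueness because a derivation is determined on generators, existence by checking that $\partial$ sends each defining relation of $\mathcal{A}_{Q^\ulk}$ into the ideal (supercommutativity relations being automatic), and $\partial^2=0$ because the square of an odd derivation is a derivation vanishing on generators. However, there is a genuine gap in your treatment of the mixed relations involving $e_i(z)$ and $e_\star(z)$ for $i\in Q_0$: the ``pigeonhole'' claim that in every Leibniz term $\frac{d^{p}}{dz^p}e_i(z)\,\frac{d^{q'}}{dz^{q'}}e_a(z)\,\frac{d^{q''}}{dz^{q''}}e_b(z)$ (with $q'+q''=q+m_{a,b}-1$) one of the pairs $(i,a)$ or $(i,b)$ has derivative-order sum below its threshold is false in general. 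Adding the two failure inequalities only gives $p+m_{a,b}\ge 2$, so the estimate works only when $m_{a,b}=1$ and $p=0$. Concretely, take $m_{i,a}=m_{i,b}=1$ and $m_{a,b}=2$: applying $\partial$ to the relation $e_i(z)\frac{d}{dz}e_\star(z)=0$ produces the term $e_i(z)e_a'(z)e_b'(z)$, and none of the pairwise products $e_i(z)e_a'(z)$, $e_i(z)e_b'(z)$, $e_a'(z)e_b'(z)$ is below its vanishing threshold (recall $m^\ulk_{a,b}=m_{a,b}-1=1$), so no single quadratic relation kills this term, even though it does vanish in $\mathcal{A}_{Q^\ulk}$.

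The repair is exactly the global order-of-vanishing argument you reserve for the case $i=j=\star$, and it is what the paper does for all relations of the second kind: $e_a(z)\frac{d^{m_{a,b}-1}}{dz^{m_{a,b}-1}}e_b(z)\cdot e_i(w)$ is (up to a nonzero constant) the first nonvanishing Taylor coefficient at $z'=z$ of $e_a(z)e_b(z')e_i(w)$, which in $\mathcal{A}_{Q^\ulk}$ is divisible by $(w-z)^{m_{i,a}}(w-z')^{m_{i,b}}(z-z')^{m_{a,b}-1}$; hence that coefficient vanishes at $w=z$ to order $m_{i,a}+m_{i,b}=m^\ulk_{i,\star}$, as required. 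The same three-variable Taylor argument also handles the relations between $\star$ and $a$ or $b$ (where the thresholds are $m_{a,a}+m_{a,b}-1$ and $m_{b,b}+m_{a,b}-1$), a case your proposal does not address explicitly. With the pigeonhole step replaced by this argument throughout, your proof is complete and matches the paper's.
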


\begin{proof}
Since we defined $\partial$ on all generators, the derivation property immediately implies that there is at most one way to extend it to derivation. To show that it extends to a well defined derivation, we need to check that it preserves the vector space spanned by quadratic relations. The relations of the algebra $\mathcal{A}_{Q^\ulk}$ that do not involve $e_{\star}(z)$ do not pose a problem, since $\partial$ vanishes on the corresponding generators. All relations of the first kind (supercommutativity relations) do not pose a problem either, since derivations are compatible with (super)commutators. Compatibility with all the remaining relations is checked by a simple direct calculation.  As an example, for $i\ne\{a,b\}$, the conditions stating that the element 
$e_\star(z)e_i(w)$ vanishes at $z=w$ to order $m_{i,\star}=m_{i,a}+m_{i,b}$ are sent by $\partial$ to the conditions stating that 
 \[
e_a(z)\frac{d^{m_{a,b}-1}\phantom{z}}{dz^{m_{a,b}-1}}(e_b(z))e_i(w)
 \]
vanishes at $z=w$ to order $m_{i,\star}=m_{i,a}+m_{i,b}$, which is true since the latter is the first nonzero coefficient of the expansion of 
 \[
e_a(z)e_b(z')e_i(w)
 \]
around $z'=z$, and this latter expression vanishes to order $m_{i,a}$ at $w=z$ and to order $m_{i,b}$ at $w=z'$. Similar arguments apply to all other relations of the second type. 

To prove the second assertion, we remark that since the derivation $\partial$ is odd, we have that $\partial^2=\frac12[\partial,\partial]$ is a derivation, so the fact that $\partial^2$ clearly vanishes on all generators implies that $\partial^2=0$. 
\end{proof}

Since $\partial^2=0$, and $\partial$ is a derivation, the cohomology 
 \[
H_\bullet(\mathcal{A}_{Q^\ulk},\partial)
 \]
has an induced algebra structure. We are now ready to state and prove the main result of this section.

\begin{thm}\label{th:unlinking-h}
We have an algebra isomorphism
 \[
H_\bullet(\mathcal{A}_{Q^\ulk},\partial)\cong \mathcal{A}_Q.
 \]
\end{thm}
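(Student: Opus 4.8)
The plan is to use the natural map $\mathcal{A}_Q \to H_\bullet(\mathcal{A}_{Q^\ulk},\partial)$, show it is well defined and surjective, and then conclude it is an isomorphism by comparing Poincar\'e series via a spectral sequence. First I would set up the map: in $\mathcal{A}_{Q^\ulk}$, the elements $e_{i,k}$ for $i\in Q_0$ are $\partial$-closed, so they define classes in $H_\bullet(\mathcal{A}_{Q^\ulk},\partial)$; moreover the relation $e_a(z)\frac{d^{m_{a,b}-1}}{dz^{m_{a,b}-1}}e_b(z)=0$ of $\mathcal{A}_Q$ becomes $\partial(e_\star(z))$, hence an exact cycle, so it vanishes in cohomology; the deeper relations $e_i(z)\frac{d^p}{dz^p}e_j(z)=0$ of $\mathcal{A}_Q$ for $p<m_{i,j}$ (with $i,j\in Q_0$) all persist literally in $\mathcal{A}_{Q^\ulk}$ because $m^\ulk_{i,j}=m_{i,j}$ except for the pair $\{a,b\}$. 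Hence there is an algebra morphism $\psi\colon\mathcal{A}_Q\to H_\bullet(\mathcal{A}_{Q^\ulk},\partial)$ sending $e_{i,k}\mapsto [e_{i,k}]$.

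Next I would check surjectivity of $\psi$. The algebra $\mathcal{A}_{Q^\ulk}$ is generated by the $e_{i,k}$, $i\in Q_0$, together with the $e_{\star,k}$, and $\partial(e_{\star,k})$ is a nonzero element built from the $e_{i,k}$ with $i\in Q_0$. One expects that in cohomology the classes $[e_{\star,k}]$ are killed — more precisely, that any $\partial$-cycle is cohomologous to one involving only the $e_{i,k}$, $i\in Q_0$ — which would give surjectivity of $\psi$. A clean way to see this, and simultaneously the main idea of the proof, is to interpret $(\mathcal{A}_{Q^\ulk},\partial)$ as a Koszul-type resolution: adjoining the odd generators $e_{\star,k}$ with $\partial(e_{\star,k})$ equal to the missing relation of $\mathcal{A}_Q$ is exactly one step of a Tate-style "killing cycles" construction. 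This suggests that $H_\bullet(\mathcal{A}_{Q^\ulk},\partial)$ has $H_0 = \mathcal{A}_Q$ and should have vanishing higher homology, but proving acyclicity directly is precisely the subtle point.

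To handle this I would follow the strategy advertised in the introduction: apply the \emph{linking} construction of Section~\ref{sec:linking} to the unlinked quiver $Q^\ulk$, with the distinguished vertices $a,b$. By Theorem~\ref{th:linking-gr}, linking $Q^\ulk$ at $a,b$ produces a quiver whose algebra is $\gr_F \mathcal{A}_{Q^\ulk}$ for the filtration $F_\bullet$ attached to $e_\diamond(z)=e_a(z)\frac{d^{m_{a,b}}}{dz^{m_{a,b}}}e_b(z)$; one checks from the explicit incidence matrices that $(Q^\ulk)^\lk$ at $a,b$ recovers $Q$ together with the extra vertex $\star$, and on generators the new vertex $\diamond$ of the linking corresponds to $\star$. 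The filtration $F_\bullet$ on $\mathcal{A}_{Q^\ulk}$ is compatible with $\partial$ (since $\partial(e_{\star,k})$ lands in $F_2$ while $e_{\star,k}\in F_1$, so $\partial$ raises filtration by exactly one, or one renormalizes so it preserves it), so it induces a spectral sequence converging to $H_\bullet(\mathcal{A}_{Q^\ulk},\partial)$ whose first page is the homology of $(\gr_F\mathcal{A}_{Q^\ulk},\gr\partial) \cong (\mathcal{A}_{(Q^\ulk)^\lk},\bar\partial)$. In the associated graded, $\bar\partial(e_{\star,k})$ becomes the linking generator $e_{\diamond,k}$, so $\bar\partial$ is a Koszul-type differential pairing the $e_{\star,k}$ with the $e_{\diamond,k}$; its homology is readily computed to be $\mathcal{A}_Q$ (concentrated in syzygy degree zero), because modding out $e_{\diamond,k}$ and $e_{\star,k}$ from $\mathcal{A}_{(Q^\ulk)^\lk}$ returns exactly the presentation of $\mathcal{A}_Q$. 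Hence the spectral sequence degenerates, giving $H_\bullet(\mathcal{A}_{Q^\ulk},\partial)\cong \mathcal{A}_Q$ as graded vector spaces, and since $\psi$ is a surjective algebra map inducing this identity on the level of Poincar\'e series — combining Theorem~\ref{thm:motivic-unlinking}, Theorem~\ref{thm:motivic-linking} and Proposition~\ref{prop:poincare series} to match dimensions of all $L\times\mathbb{Z}$-graded components — it is an isomorphism of algebras.

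The main obstacle I anticipate is the careful bookkeeping in the spectral sequence step: verifying that $\gr_F\partial$ is literally the linking differential $\bar\partial$ under the identification $\mathcal{A}_{(Q^\ulk)^\lk}\cong \gr_F\mathcal{A}_{Q^\ulk}$, checking convergence (the filtration is exhaustive and bounded below in each fixed multidegree because the relevant graded components of $\mathcal{A}_{Q^\ulk}$ are finite dimensional and concentrated in bounded weight), and computing $H_\bullet(\mathcal{A}_{(Q^\ulk)^\lk},\bar\partial)$ — i.e.\ confirming it is concentrated in syzygy degree zero and equal to $\mathcal{A}_Q$ rather than merely surjecting onto it. The dimension count via motivic generating series is what ultimately forces the collapse and the exactness, so the genuinely new input is purely the compatibility of the three constructions (unlinking, the differential $\partial$, and linking/filtration), with everything else reducing to results already established in the excerpt.
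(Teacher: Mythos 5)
Your route is the same as the paper's: filter $\mathcal{A}_{Q^\ulk}$ by the linking filtration at $a,b$, identify $\gr_F\mathcal{A}_{Q^\ulk}\cong\mathcal{A}_{(Q^\ulk)^\lk}$ via Theorem \ref{th:linking-gr}, note that the induced differential sends $e_\star(z)$ to $e_\diamond(z)$, and run the spectral sequence, which degenerates because the first page sits in a single syzygy degree. (A small correction along the way: $\partial$ in fact \emph{preserves} the filtration, since $\partial(e_{\star,k})=e_{\diamond,k}$ lies in $F_1$ by the very definition of the linking filtration --- no renormalization is needed, and this is precisely why $\gr_F\partial(e_\star(z))=e_\diamond(z)$ is nonzero on the associated graded.) The genuine gap is at the heart of the argument: the claim that $H_\bullet(\mathcal{A}_{(Q^\ulk)^\lk},\gr_F\partial)\cong\mathcal{A}_Q$ is not ``readily computed''. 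Observing that modding out the $e_{\star,k}$ and $e_{\diamond,k}$ recovers the presentation of $\mathcal{A}_Q$ only identifies the part of the homology in weight $\mathbf{d}_\star+\mathbf{d}_\diamond=0$; vanishing in positive $\star/\diamond$-weight is not a formal Koszul-complex statement, because $\mathcal{A}_{(Q^\ulk)^\lk}$ is far from free on these generators (they satisfy many quadratic relations among themselves and with the $e_{i,k}$, $i\in Q_0$), and it is exactly this acyclicity that you flag as an obstacle and then do not prove.

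Your proposed fallback --- forcing exactness by a dimension count with motivic generating series --- does not close this gap, in contrast with the proof of Theorem \ref{th:linking-gr}. There the comparison was between two honest multigraded vector spaces and a degree-preserving surjection; here, once a differential is present, comparing with $A_Q(x,q)$ requires the specialization $x_\star=q^{-\frac12}x_ax_b$ (and similarly for $x_\diamond$), under which distinct $L\times\mathbb{Z}$-degrees collapse and the Poincar\'e series computes only a signed Euler characteristic of the complex. Equality of Euler characteristics cannot exclude spurious homology classes occurring in cancelling pairs in adjacent homological degrees with $\mathbf{d}_\star+\mathbf{d}_\diamond>0$, so it cannot ``force the collapse and the exactness''. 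The paper fills exactly this hole with an explicit contracting homotopy: the odd derivation $h$ of $\mathcal{A}_{(Q^\ulk)^\lk}$ with $h(e_\diamond(z))=e_\star(z)$ and $h=0$ on all other generators, whose well-definedness must be checked against the quadratic relations (this is where the identities $(m^\ulk)^\lk_{i,\star}=(m^\ulk)^\lk_{i,\diamond}$ for $i\in Q_0$ and the specific values of $(m^\ulk)^\lk_{\star,\star}$, $(m^\ulk)^\lk_{\star,\diamond}$, $(m^\ulk)^\lk_{\diamond,\diamond}$ enter); then $[\gr_F\partial,h]$ acts on each $\mathbf{d}$-component as multiplication by $\mathbf{d}_\star+\mathbf{d}_\diamond$, so all subcomplexes of positive weight are contractible and the first page is exactly $\mathcal{A}_Q$. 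With that lemma supplied, the remainder of your outline (degeneration of the spectral sequence and identification of the algebra structure from the syzygy-degree-zero description) goes through as in the paper.
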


\begin{proof}
Let us consider the linking procedure of the vertices $a$ and $b$ of the quiver $Q^\ulk$. As we established in Theorem \ref{th:linking-gr}, the associated graded of the algebra $\mathcal{A}_{Q^\ulk}$ with respect to the filtration defined in Section \ref{sec:filtr-thm} is precisely the algebra $\mathcal{A}_{(Q^\ulk)^\lk}$. The incidence matrix of the quiver $(Q^\ulk)^\lk$ is given by 
 \[
(m^\ulk)^\lk_{i,j}=
\begin{cases}
\quad\qquad m_{i,j},\qquad\qquad\qquad\quad \text{ for all } i,j\in Q_0,\\
\quad m_{i,a}+m_{i,b}-1,\qquad\qquad\,  i\in\{a,b\}, j\in\{\star,\diamond\},\\
\quad m_{j,a}+m_{j,b}-1,\qquad\qquad\,  i\in\{\star,\diamond\}, j\in\{a,b\}, \\
\quad m_{i,a}+m_{i,b},\qquad\qquad\qquad  i\in Q_0\setminus\{a,b\}, j\in\{\star,\diamond\},\\
\quad m_{j,a}+m_{j,b},\qquad\qquad\qquad  i\in\{\star,\diamond\}, j\in Q_0\setminus\{a,b\}, \\
m_{a,a}+m_{b,b}+2m_{a,b}-1,\quad\,  i=j=\star,\\
m_{a,a}+m_{b,b}+2m_{a,b}-2,\qquad\!\!\!  \{i,j\}=\{\star,\diamond\},\\
m_{a,a}+m_{b,b}+2m_{a,b}-2,\qquad\!\!\!  i=j=\diamond.
\end{cases}
 \] 
Moreover, the map $\partial$ induces an odd derivation $\gr_F\partial$ of the algebra $\mathcal{A}_{(Q^\ulk)^\lk}$ which is of particularly simple form: we have
 \[
\gr_F\partial(e_\star(z))=e_\diamond(z),
 \]
and all other generators are annihilated by $\gr_F\partial$. Let us show that  
  \[
H_\bullet(\mathcal{A}_{(Q^\ulk)^\lk},\gr_F\partial)\cong \mathcal{A}_Q.
 \]
To that end, we shall now define a contracting homotopy for $\gr_F\partial$. That is done using the following lemma.

\begin{lem} 
There is a unique odd derivation $h\colon \mathcal{A}_{(Q^\ulk)^\lk}\to \mathcal{A}_{(Q^\ulk)^\lk}$ which satisfies
 \[
h(e_{\diamond}(z))= e_\star(z) 
 \]
component-wise, and annihilates all other generators.  
\end{lem} 

\begin{proof}
As in Proposition \ref{prop:deriv}, we need to show that this derivation is compatible with relations. A relation that does not involve $e_{\diamond}(z)$ is annihilated by $h$. All relations of the first kind (supercommutativity relations) do not pose a problem either, since derivations are compatible with (super)commutators. Let us consider the remaining cases. Since for $i\in Q_0$ we have $(m^\ulk)^\lk_{i,\star}=(m^\ulk)^\lk_{i,\diamond}$, the corresponding relations are preserved. The relation stating that $e_\diamond(z)e_\star(w)$ vanishes at $z=w$ to order $m_{\diamond, \star}=m_{a,a}+m_{b,b}+2m_{a,b}-2$ is sent to the relation stating that $e_\star(z)e_\star(w)$ vanishes at $z=w$ to the same order, and that is true even to order one higher. Finally, the relation stating that $e_\diamond(z)e_\diamond(w)$ vanishes at $z=w$ to order $m_{\diamond, \diamond}=m_{a,a}+m_{b,b}+2m_{a,b}-2$ is sent to the relation stating that $e_\diamond(z)e_\star(w)\pm e_\star(z)e_\diamond(w)$ vanishes at $z=w$ to the same order, and that is true for each of the two terms individually. 
\end{proof}

We now remark that the commutator $[\gr_F\partial,h]$ is a derivation which satisfies
 \[
[\gr_F\partial,h](e_\star(z))=e_\star(z), \quad [\gr_F\partial,h](e_\diamond(z))=e_\diamond(z),
 \]
and annihilates all other generators. Thus, on each homogeneous component $(\mathcal{A}_{(Q^\ulk)^\lk})_{\mathbf{d}}$ this derivation is the multiplication by $\mathbf{d}_\star+\mathbf{d}_\diamond$. Note that the cochain complex
$(\mathcal{A}_{(Q^\ulk)^\lk},\gr_F\partial)$
decomposes into a direct sum of subcomplexes consisting of all $L$-homogeneous components with the given fixed value of $\mathbf{d}_\star+\mathbf{d}_\diamond$. As we just established, each such complex is contractible unless $\mathbf{d}_\star+\mathbf{d}_\diamond=0$. The latter complex is clearly just $\mathcal{A}_{Q}$ with zero differential. 

We therefore established that the homology of the associated graded object is $\mathcal{A}_Q$. Let us explain that this implies the assertion of the theorem. Indeed, the homology of the chain complex $(\mathcal{A}_{Q^\ulk},\partial)$ in the syzygy degree zero is clearly isomorphic to $\mathcal{A}_Q$ as an algebra, since the differential $\partial$ is designed specifically for that. On the other hand, we have just shown that  
 \[
H_\bullet(\mathcal{A}_{(Q^\ulk)^\lk},\gr_F\partial)\cong \mathcal{A}_Q,
 \]
and normally one has a spectral sequence with the first page $H^\bullet(\mathcal{A}_{(Q^\ulk)^\lk},\gr_F\partial)$ that converges to $H^\bullet(\mathcal{A}_{Q^\ulk},\partial)$, but since in our case the first page is concentrated in single syzygy degree, there is no room for higher differentials.
\end{proof}

\section{The Koszulness conjecture}\label{sec:koszulness}

We are now ready to prove the Koszulness conjecture of \cite{MR4499100}.

\begin{thm}
For each symmetric quiver $Q$, the algebra $\mathcal{A}_Q$ is Koszul.
\end{thm}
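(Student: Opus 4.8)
The strategy is to combine the two categorification theorems established above with the combinatorial reduction of \cite{MR4627326} to the case of multiple loop quivers, for which Koszulness is already known from \cite{MR4499100}. The backbone of the argument is that both operations we have categorified — linking (Theorem \ref{th:linking-gr}) and unlinking (Theorem \ref{th:unlinking-h}) — preserve Koszulness. Indeed, if $\mathcal{A}_{Q^\lk}\cong\gr_F\mathcal{A}_Q$ is Koszul, then by the standard fact that an algebra admitting a filtration whose associated graded is Koszul is itself Koszul (a PBW-type deformation argument; see \cite{PP}), the algebra $\mathcal{A}_Q$ is Koszul as well. Likewise, if $\mathcal{A}_{Q^\ulk}$ is Koszul, then since $H_\bullet(\mathcal{A}_{Q^\ulk},\partial)\cong\mathcal{A}_Q$ and $\partial$ is a quadratic (in the weight grading) derivation, the minimal model of $\mathcal{A}_{Q^\ulk}$ together with $\partial$ gives, after taking homology, a quadratic model of $\mathcal{A}_Q$; more simply, the spectral sequence argument inside the proof of Theorem \ref{th:unlinking-h} already expresses $\mathcal{A}_Q$ as homology concentrated in a single syzygy degree, from which one reads off that a quadratic minimal model of $\mathcal{A}_{Q^\ulk}$ descends to one of $\mathcal{A}_Q$. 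So the plan is: first establish these two "Koszulness is inherited" lemmas precisely, then invoke the combinatorial algorithm, then conclude by induction.

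The second ingredient is the reduction procedure of \cite{MR4627326}: starting from an arbitrary symmetric quiver $Q$, one may apply a finite sequence of linking and unlinking moves (to suitable pairs of vertices, adding a new vertex each time) so as to arrive at a fully unlinked quiver, that is, a disjoint union of multiple loop quivers (quivers with a single vertex). I would recall the statement of that algorithm verbatim, noting the termination/monovariant argument that guarantees the process stops after finitely many steps. Since the algebra $\mathcal{A}_{Q_1\sqcup Q_2}$ is the tensor product $\mathcal{A}_{Q_1}\otimes\mathcal{A}_{Q_2}$, and tensor products of Koszul algebras are Koszul, it suffices to know Koszulness for a single multiple loop quiver — which is exactly one of the cases ("regular" components with all $m_{ij}$ equal, and $m_{ii}\in\{N,N+1\}$ trivially satisfied when there is only one vertex) covered by the theorem of \cite{MR4499100}.

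The argument then runs by induction on the length of the reduction sequence. If $Q$ is already a disjoint union of loop quivers, we are done by \cite{MR4499100}. Otherwise the first move in the sequence is either a linking or an unlinking of some pair $a\neq b$; in the linking case $\mathcal{A}_{Q^\lk}$ is Koszul by the inductive hypothesis (its reduction sequence is one shorter) and hence so is $\mathcal{A}_Q\cong\gr_F^{-1}$ of it by the first lemma; in the unlinking case $\mathcal{A}_{Q^\ulk}$ is Koszul by induction and hence so is $\mathcal{A}_Q\cong H_\bullet(\mathcal{A}_{Q^\ulk},\partial)$ by the second lemma. This closes the induction.

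The step I expect to be the main obstacle — or at least the one requiring the most care — is the precise formulation and justification of "Koszulness passes to homology along $\partial$" in the unlinking case. The filtered-deformation statement for linking is classical and robust, but for unlinking one must be careful that the relevant homology is taken with respect to the internal (syzygy) grading of the minimal model rather than the internal homological grading of $\mathcal{A}_{Q^\ulk}$ itself, and that the $\partial$ from Proposition \ref{prop:deriv} interacts correctly with the Tate construction — essentially, one wants $\partial$ to extend to the minimal model as a quadratic perturbation of the Koszul differential so that the total complex is still a (now curved-free, quadratic) model of $\mathcal{A}_Q$. The cleanest route is probably to bypass minimal models and argue purely at the level of Poincaré series together with the collapse of the spectral sequence: Koszulness of $\mathcal{A}_{Q^\ulk}$ pins down its bigraded Poincaré series, Theorem \ref{thm:motivic-unlinking} and Proposition \ref{prop:poincare series} then pin down that of $\mathcal{A}_Q$, and a comparison with the numerical Koszulness already proved in \cite{MR4499100} forces the Koszul bound to be attained. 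I would present that numerical route as the main line of argument, since it sidesteps the homotopical bookkeeping entirely.
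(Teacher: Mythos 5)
There is a genuine gap, in fact two. First, your induction rests on the claim that the diagonalization of \cite{MR4627326} is a \emph{finite} sequence of moves terminating in a disjoint union of loop quivers, and you propose to supply a ``termination/monovariant argument.'' No such argument exists in general: unlinking a pair $a,b$ creates a new vertex $\star$ with $m_{i,\star}=m_{i,a}+m_{i,b}$ (and $m_{a,\star}=m_{a,a}+m_{a,b}-1$, etc.), so except in degenerate cases the new vertex is again connected to the old ones and the procedure never stops; this is precisely why \cite{MR4627326}, and the paper, work with the limiting quiver $Q^{(\infty)}=\lim_n Q^{(n)}$, where convergence holds only in the $x$-adic sense (the discrepancy of generating series is $O(x^{n+1})$). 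Consequently your induction ``on the length of the reduction sequence'' has no base case to reach, and the whole scheme collapses. (Two smaller remarks: the diagonalization uses only unlinking, so your linking lemma is not needed at the top level --- in the paper linking enters only inside the proof of Theorem \ref{th:unlinking-h}; and the ``classical'' PBW-type statement you invoke does not apply verbatim anyway, since the weight grading of $\gr_F\mathcal{A}_Q\cong\mathcal{A}_{Q^\lk}$ by the new generators $e_{\diamond,k}$ is not the generator grading of $\mathcal{A}_Q$.)

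Second, the fallback you propose for the key inheritance step --- deduce Koszulness of $\mathcal{A}_Q$ from its Poincar\'e series via Theorem \ref{thm:motivic-unlinking}, Proposition \ref{prop:poincare series} and the numerical Koszulness of \cite{MR4499100} --- is not a valid inference. Numerical Koszulness is an Euler-characteristic statement; it permits cancellations in the minimal model and does not pin down the syzygy degrees in which generators occur, which is exactly why the conjecture remained open after \cite{MR4499100} (and why non-Koszul algebras with ``Koszul-looking'' Hilbert series exist). The paper's actual mechanism is structural, not numerical: one puts on $\mathcal{A}_{Q^{(\infty)}}$ the sum $\partial^{(\infty)}$ of all unlinking differentials of Proposition \ref{prop:deriv}, observes that $H_\bullet(\mathcal{A}_{Q^{(\infty)}},\partial^{(\infty)})\cong\mathcal{A}_Q$ by iterating Theorem \ref{th:unlinking-h} through a bicomplex spectral sequence, and then --- since $Q^{(\infty)}$ is diagonal --- splices $\partial^{(\infty)}$ with the explicit quadratic resolutions $(\mathcal{B}_i,\partial_i)$ of the one-vertex algebras from \cite{MR4499100}. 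The total differential $\partial^{(\infty)}+\sum_i\partial_i$ is quadratic, so one obtains an explicit free supercommutative resolution of $\mathcal{A}_Q$ with quadratic differential, which is Koszulness by the definition of Section \ref{sec:koszulalg}. The step you yourself flag as the main obstacle (lifting $\partial$ to a minimal model of a general $\mathcal{A}_{Q^\ulk}$ quadratically) is thus never confronted in the paper: it is bypassed by passing to the limit, where the resolutions are known explicitly, rather than by a Poincar\'e series count.
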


\begin{proof}
To begin, we briefly recall the quiver diagonalization procedure of \cite{MR4627326}. That procedure constructs, for a symmetric quiver $Q$ 
(with the vertex set $Q_0$) and for each $n\ge 1$, 
\begin{itemize}
\item[-] a finite set $V^{(n)}$ and a symmetric quiver $Q^{(n)}$ with the set of vertices 
 \[
Q^{(n)}_0=Q_0\sqcup\bigsqcup_{1\le k\le n}V^{(k)},
 \] 
\item[-] monomials $m_i^{(n)}$
in variables $x_j$, $j\in Q_0$, and $q^{\pm1}$, which are indexed by $i\in V^{(n)}$, 
\end{itemize}
such that  
\begin{itemize}
\item[-] each quiver $Q^{(n)}$ is diagonal, that is only has loops among its arrows, 
\item[-] we have 
 \[
A_Q\left(\left\{x_i\colon i\in Q_0\right\},q\right)-\left.A_{Q^{(n)}}\left(\left\{\tilde x_i\colon i\in Q^{(n)}_0\right\},q\right)\right|_{\tilde x_j:= m_j^{(k)},\, j\in V^{(k)}, 1\le k\le n}=O(x^{n+1}).
 \]
\end{itemize}
Essentially, one first constructs the sequence of quivers $\bar{Q}^{(n)}$ by putting $\bar{Q}^{(0)}:=Q$, and stating that for each $k\ge 1$, the quiver $\bar{Q}^{(k)}$ is obtained from $\bar{Q}^{(k-1)}$ by applying unlinking to remove all the edges between different vertices of $\bar{Q}^{(k-1)}_0$; by definition, we denote by $V^{(k)}:=\bar{Q}^{(k)}_0\setminus \bar{Q}^{(k-1)}_0$ the set of newly obtained vertices. The quiver $Q^{(n)}$ is obtained from $\bar{Q}^{(n)}$ by forgetting all edges that are not loops. The novel idea of \cite{MR4627326} was to consider the limiting quiver $Q^{(\infty)}=\lim_{n\to\infty} Q^{(n)}$, whose motivic generating series is on the one hand equal to the motivic generating series of $Q$, and on the other hand factorizes as the product of series obtained from motivic generating series of one-vertex quiver by substitutions of various monomials instead of the variable $x$, thus expressing motivic DT invariants of $Q$ via motivic DT invariants of one-vertex quivers. We shall now explain how this can be categorified into a proof of Koszulness of the algebra $\mathcal{A}_Q$.

Let us consider the quiver $\bar{Q}^{(n)}$ for some $n$. It follows from the results of Section \ref{sec:partresol} that the algebra~$\mathcal{A}_{\bar{Q}^{(n)}}$ can be equipped with a differential $\partial^{(n)}$ such that 
 \[
H_\bullet(\mathcal{A}_{\bar{Q}^{(n)}},\partial^{(n)})\cong \mathcal{A}_Q.
 \] 
Indeed, it is enough to put $\partial^{(n)}$ to be the sum of all the differentials corresponding to simple unlinkings that we constructed in Proposition \ref{prop:deriv}. It is clear that such differentials corresponding to unlinking of two different pairs of vertices of $\bar{Q}^{(k-1)}_0$ pairwise anticommute, so their sum is a differential. One may compute homology of that differential inductively by iterating the spectral sequence of a bicomplex; at each step the result of Theorem \ref{th:unlinking-h} ensures that the homology is isomorphic to $\mathcal{A}_Q$.  

Considering the limit for $n\to\infty$, we obtain a differential $\partial^{(\infty)}$ on the algebra $\mathcal{A}_{Q^{(\infty)}}$ such that
 \[
H_\bullet(\mathcal{A}_{Q^{(\infty)}},\partial^{(\infty)})\cong \mathcal{A}_Q.
 \] 
Moreover, this differential is quadratic. Since the quiver $Q^{(\infty)}$ is diagonal, we have an isomorphism of algebras
 \[
\mathcal{A}_{Q^{(\infty)}}\cong \bigotimes_{i\in Q^{(\infty)}_0} \mathcal{A}_i,
 \]
where each algebra $\mathcal{A}_i$ corresponds to a certain one-vertex quiver. It is established in \cite{MR4499100} that such algebras are Koszul, so they admit free commutative algebra resolutions $(\mathcal{B}_i,\partial_i)$ with quadratic differentials~$\partial_i$. Let us equip the algebra
 \[
\mathcal{B}:=\bigotimes_{i\in Q^{(\infty)}_0} \mathcal{B}_i
 \]  
with the differential given by $\partial:=\partial^{(\infty)}+\sum_{i\in Q^{(\infty)}_0}\partial_i$. We claim that 
$H_\bullet(\mathcal{B},\partial)\cong\mathcal{A}_Q$.   
To establish this result, we note that if we first compute the homology of the differential $\sum_{i\in Q^{(\infty)}_0}\partial_i$, the K\"unneth formula implies that the result is precisely $\mathcal{A}_{Q^{(\infty)}}$, and then the second page of the spectral sequence of a bicomplex computes the homology of $\partial^{(\infty)}$, proving the required statement. It remains to notice that the differential $\partial$ is quadratic, which immediately implies that the algebra $\mathcal{A}_Q$ is Koszul.
\end{proof}


\printbibliography

\end{document}